\numberwithin{equation}{section}
\newtheorem*{VC}{Vakil's Criterion}
\newtheorem*{conjecture}{Conjecture}
\newtheorem{theorem}{Theorem}
\newtheorem{proposition}[theorem]{Proposition}
\newtheorem{lemma}[theorem]{Lemma}
\theoremstyle{remark}
\newtheorem{remark}[theorem]{Remark}
\newtheorem{definition}[theorem]{Definition}
\newcounter{FNC}[page]
\def\fauxfootnote#1{{\addtocounter{FNC}{2}\Magenta{$^\fnsymbol{FNC}$}%
     \let\thefootnote\relax\footnotetext{\Magenta{$^\fnsymbol{FNC}$#1}}}}
\newcommand{\G}{{\mathbb G}}
\newcommand{\K}{{\mathbb K}}
\renewcommand{\P}{{\mathbb P}}
\newcommand{\adot}{a_\bullet}
\newcommand{\Gal}{{\mathcal G}}
\newcommand{\calK}{{\mathcal K}}
\newcommand{\calL}{{\mathcal L}}
\newcommand{\calS}{{\mathcal S}}
\newcommand{\PGL}{{\rm PGL}}
\newcommand{\pr}{\mbox{\it pr}}
\newcommand{\lhra}{\ensuremath{\lhook\joinrel\relbar\joinrel\relbar\joinrel\rightarrow}}
\def\Color#1#2{#2}
\def\Gre#1{\Color{1 0 1 0.3}{#1}}
\newcommand{\defcolor}[1]{\RoyalBlue{#1}}
\newcommand{\demph}[1]{\defcolor{{\sl #1}}}
\newcommand{\Kosfi}[8]{\begin{picture}(58,25)(-2.9,-2.5)
  \put(-3.4,-3){\includegraphics[height=24pt]{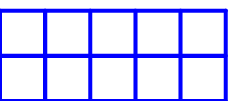}}
  \put( 0,11.5){{\small#1}}\put(11.75,11.5){\small#2}
  \put(23.5,11.5){\small#3}\put(35.25,11.5){\small#4}
  \put(47,11.5){\small#5}
  \put( 0,-0.3){\small#6}\put(11.75,-0.3){\small#7}
  \put(23.5,-0.3){\small#8}\put(35.25,-0.3){\small#8}
  \put(47,-0.3){\small#8}
 \end{picture}}
\def\Kosfo#1#2#3#4#5#6#7{\begin{picture}(47,25)(-2.9,-2.5)
  \put(-3.4,-3){\includegraphics[height=24pt]{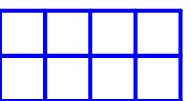}} 
  \put( 0,11.5){\small#1}\put(11.75,11.5){\small#2}
  \put(23.5,11.5){\small#3}\put(35.25,11.5){\small#4}
  \put( 0,-0.3){\small#5}\put(11.75,-0.3){\small#6}
  \put(23.5,-0.3){\small#7}\put(35.25,-0.3){\small#7}
 \end{picture}}
\title[Galois groups of Schubert problems of lines]{Galois groups of Schubert problems of lines
  are at least alternating} 
\author[Brooks]{Christopher J.\ Brooks}
\address{Department of Mathematics\\
         University of Utah\\
         Salt Lake City\\
         Utah \ 84112-0090\\
         USA
         }
\email{cbrooks@math.utah.edu}
\urladdr{\url{http://www.math.utah.edu/~cbrooks/}}
\author[Mart\'in del Campo]{Abraham Mart\'in del Campo}
\address{Abraham Mart\'in del Campo\\
         IST Austria\\
         Am Campus 1\\
         3400 Klosterneuburg\\
         Austria
         }
\email{abraham.mc@ist.ac.at}
\urladdr{\url{http://pub.ist.ac.at/~adelcampo/}}
\author[Sottile]{Frank Sottile}
\address{Frank Sottile \\
         Department of Mathematics\\
         Texas A\&M University\\
         College Station\\
         Texas \ 77843\\
         USA}
\email{sottile@math.tamu.edu}
\urladdr{\url{http://www.math.tamu.edu/~sottile}}
\thanks{Research supported in part by NSF grant DMS-915211 and the Institut
  Mittag-Leffler}
\subjclass[2010]{14N15, 05E15}
\keywords{Galois groups, Schubert calculus, Kostka numbers, Enumerative geometry}
\begin{document}

\begin{abstract}
 We show that the Galois group of any Schubert problem involving lines in projective space
 contains the alternating group.  
 This constitutes the largest family of enumerative problems whose Galois groups have been
 largely determined. 
 Using a criterion of Vakil and a special position argument due to Schubert,
 our result follows from a particular inequality among Kostka
 numbers of two-rowed tableaux.
 In most cases, a combinatorial injection proves the inequality.
 For the remaining cases, we use 
 the Weyl integral formulas 
 to obtain an integral formula for these Kostka numbers.
 This rewrites the inequality as an integral, which we estimate to establish the inequality.
\end{abstract}

\maketitle

%
\section*{Introduction}

Galois (monodromy) groups of problems from enumerative geometry were first treated by Jordan in 1870~\cite{J1870}, who studied  several classical 
problems with intrinsic structure, showing that their Galois group was not the full
symmetric group on the set of solutions to the enumerative problem.  
Others~\cite{DBM,Weber} refined this work, which focused on the equations for the
enumerative problem.
%
%
%
Earlier, Hermite gave a different connection to geometry, showing that the algebraic
  Galois group coincided with a geometric monodromy group~\cite{Hermite} in the context of
  Puiseaux fields and algebraic curves.
This line of inquiry remained dormant until a 1977 letter of Serre to
Kleiman~\cite[p.~325]{Kl87}. 
The modern, geometric, theory began with  Harris~\cite{Ha79}, who 
determined the Galois groups of several classical problems, including many whose Galois group
is equal to the full symmetric group.   
In general, we expect that the Galois group of an enumerative problem is the full
symmetric group and when it is not, the geometric problem possesses some intrinsic
structure. 
Despite this, there are relatively few enumerative problems whose Galois group is known.
For a discussion, see Harris~\cite{Ha79} and Kleiman~\cite[pp.~356-7]{Kl87}.

The Schubert calculus of enumerative geometry~\cite{KL72} is a method to compute the
number of solutions to \demph{Schubert problems}, which are a class of geometric
problems involving linear subspaces. 
The algorithms of Schubert calculus reduce the enumeration to combinatorics.
For example, the number of solutions to a Schubert problem involving lines
is a Kostka number for a rectangular partition with two parts.
This well-understood class of problems provides a laboratory with which to study Galois groups of
enumerative problems.

The prototypical Schubert problem is the classical problem of four lines, which asks for
the number of lines in space that meet four given lines. 
To answer this, note that three general lines $\Red{\ell_1},\Blue{\ell_2}$, and $\Green{\ell_3}$
lie on a unique doubly-ruled hyperboloid, shown in Figure~\ref{F:four_lines}. 
\begin{figure}[htb]
\[
  \begin{picture}(314,192)
   \put(3,0){\includegraphics[height=6.6cm]{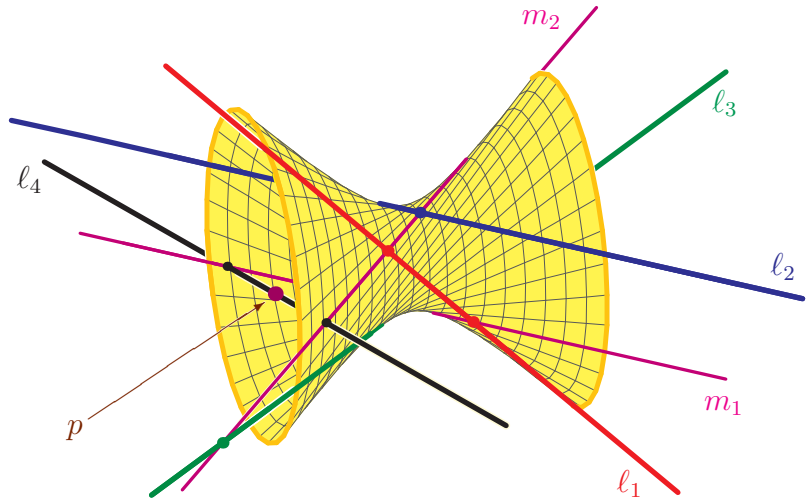}}
   \put(230,  3){\Red{$\ell_1$}}
   \put(288, 85){\Blue{$\ell_2$}}
   \put(266,147){\ForestGreen{$\ell_3$}}
   \put(  3,118){$\ell_4$}
   \put(263, 34){\Magenta{$m_1$}}
   \put(194,180){\Magenta{$m_2$}}

   \thicklines
   \put(30,30.5){\White{\line(3,2){66.6}}}
   \thinlines
   \put( 22, 25){\Brown{$p$}}\put(30,30.5){\Brown{\vector(3,2){66.6}}}
  \end{picture}
\]
\caption{The two lines meeting four lines in space.\label{F:four_lines}}
\end{figure}
These three lines lie in one ruling, while the second ruling consists of
the lines meeting  $\Red{\ell_1},\Blue{\ell_2}$, and $\Green{\ell_3}$.
The fourth line $\ell_4$ meets the hyperboloid in two points.
Each of these points determines a line in the second
ruling, giving two lines \Magenta{$m_1$} and \Magenta{$m_2$} which meet our four given lines.
In terms of Kostka numbers, enumerating the solutions is equivalent to enumerating the 
tableaux of shape $\lambda = (2,2)$ with content $(\Red{1},\Blue{1},\Green{1},1)$. 
There are two such tableaux:
\[
\hspace{15pt}
\begin{picture}(56,27)(-3.4,-3)
  \put(-3.4,-3){\includegraphics{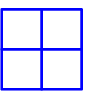}}
  \put(0,11.5){\small  \Red{1}}\put(11.5,11.5){\small \Blue{2}}
  \put(0, 0){\small \Green{3}}\put(11.5, 0){\small 4} 
\end{picture}
\hspace{-10pt}
\begin{picture}(56,27)(-3.4,-3)
  \put(-3.4,-3){\includegraphics{pictures/22.blue.eps}}
  \put(0,11.5){\small  \Red{1}}\put(11.5,11.5){\small \Green{3}}
  \put(0, 0){\small \Blue{2}}\put(11.5, 0){\small 4} 
\end{picture}
\]

When the field is the complex numbers, 
Hermite's result gives one approach to studying the
Galois group---by directly computing monodromy.
For instance, the Galois group of the problem of four lines is the group of permutations
which are obtained by following the solutions over closed paths in the space of lines
$\Red{\ell_1},\Blue{\ell_2},\Green{\ell_3}, \ell_4$.
Rotating $\ell_4$ about the point $p$ (shown in
Figure~\ref{F:four_lines}) gives a closed path which   
interchanges the two solution lines $\Magenta{m_1}$ and $\Magenta{m_2}$, showing that the
Galois group is the full symmetric group on the two solutions.

Leykin and Sottile~\cite{LS09} followed this approach, using numerical homotopy
continuation~\cite{SW05} to compute monodromy for a few dozen so-called {\it simple} Schubert problems,
showing that in each case the Galois group was the full symmetric group on the set of
solutions. 
(The problem of four lines is simple.)
This included a problem involving 2-planes in $\P^8$ with 17589 solutions.
Billey and Vakil~\cite{BV} 
used elimination theory to
compute lower bounds of Galois groups, and they 
showed
that a few enumerative problems on Grassmannians with at most 10 solutions have Galois group
equal to the full symmetric group. 

When the ground field is algebraically closed, 
Vakil~\cite{Va06b} gave a combinatorial criterion,
based on classical special position arguments and group theory, which can be used
recursively to show that a Galois group contains the alternating group on its set of
solutions. 
He used this and his geometric Littlewood-Richardson rule~\cite{Va06a} to show that the
Galois group of every Schubert problem involving lines in  projective space $\P^n$ for
$n<16$ had Galois group that was at least alternating. 
Our main result is based on this observation.

\begin{theorem}\label{Th:one}
 The Galois group of any Schubert problem involving lines in $\P^n$ contains the 
  alternating group on its set of solutions.
\end{theorem}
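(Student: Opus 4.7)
The plan is to argue by induction, with the base cases (small $n$, say $n\le 15$) already handled by Vakil's computer-assisted application of his Littlewood--Richardson rule. The inductive step will apply Vakil's criterion, which provides a sufficient condition for the Galois group of an enumerative problem to contain the alternating group, assuming the Galois groups of certain ``subproblems'' obtained by a degeneration already contain their own alternating groups. Concretely, after invoking the inductive hypothesis on the subproblems, Vakil's criterion reduces the theorem to checking a purely numerical inequality between the solution counts of the original problem and those of the degenerated pieces.

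To produce the degeneration, I would use Schubert's classical special position argument: move one of the flags defining a Schubert condition in a one-parameter family until it acquires extra incidence with another flag. The limiting Schubert problem then breaks as a union of two simpler Schubert problems—typically one on a smaller Grassmannian (corresponding to lines in $\P^{n-1}$) and one on the same Grassmannian but with modified Schubert conditions of smaller total codimension. Since the number of solutions to a Schubert problem of lines is a Kostka number $K_{\lambda\mu}$ for a two-row rectangle $\lambda$, Vakil's inequality translates into a specific inequality among Kostka numbers of two-rowed shapes, comparing the original Kostka number with the two arising from the degeneration.

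Proving this Kostka inequality is the technical heart of the argument. For the majority of parameter ranges, I would exhibit an explicit combinatorial injection from the semistandard Young tableaux counted by the larger Kostka number into the disjoint union (or multiple copies) of the tableaux counted by the smaller ones. Such an injection can typically be built by classifying tableaux according to the position of a distinguished entry, then mapping each class into a block of the target in a way that remains well-defined and invertible.

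The remaining (and harder) cases are precisely those where the two sides are so close in size that no elementary injection exists. For these, I would use the fact that Kostka numbers for two-row shapes are multiplicities of weights in irreducible $\slC$-representations, and apply the Weyl integration formula on $SU(2)$ to write each relevant Kostka number as a trigonometric integral of a product of characters. The inequality then becomes an inequality between two such integrals, which can be attacked by estimating the integrand and isolating its dominant contribution. The main obstacle will be this analytic estimate: obtaining bounds that are simultaneously sharp enough to imply the desired inequality and uniform enough to cover the infinite family of parameter values not dispatched by the combinatorial injection.
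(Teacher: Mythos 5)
Your outline is the paper's argument: Vakil's criterion plus Schubert's degeneration reduces the theorem to an inequality of Kostka numbers, established by a combinatorial injection when some $a_i \neq a_j$ and by the Weyl integral formula plus analytic estimates when all $a_i$ are equal. One point needs tightening, however. You frame the induction as being on $n$ alone, with a base case at $n\le 15$, but Schubert's degeneration produces two pieces: one of type $(a_1,\ldots,a_{m-2},a_{m-1}{-}1,a_m{-}1)$ living in $\G(1,\P^{n-1})$, to which an $n$-induction applies, and one of type $(a_1,\ldots,a_{m-2},a_{m-1}{+}a_m)$ that remains a Schubert problem in $\G(1,\P^{n})$, just with one fewer condition and the same total codimension $2n-2$. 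An induction on $n$ alone does not reach the second piece; you need a double induction on $n$ and the number of conditions $m$, which is what the paper does. Once that is in place the base case becomes nearly trivial---whenever $n<4$ or $m<4$ the Kostka number is at most $2$, so the Galois group is automatically at least alternating---and the appeal to Vakil's $n\le 15$ computations is unnecessary. A smaller quibble: the injection in the paper is not into a disjoint union but between two rearranged single Kostka sets, obtained by applying Schubert's recursion in two different orders and comparing; the non-surjectivity of that injection gives the needed strict inequality.
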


This nearly determines the Galois group for a large class of Schubert problems. 
In Subsection~\ref{S:symmetric}, we present two infinite families of Schubert problems of
lines, both of which generalize the problem of four lines, and show that each Schubert
problem in these families has Galois group the full symmetric group on its set of solutions.
We conjecture this is always the case for Schubert problems of lines.

\begin{conjecture}
 Any Schubert problem involving lines in $\P^n$ has Galois group the full symmetric
   group on its set of solutions.
\end{conjecture}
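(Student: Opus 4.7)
The plan is to prove Theorem~\ref{Th:one} by induction, with Vakil's Criterion playing the role of the inductive engine. Vakil's Criterion says, roughly, that if a Schubert problem can be degenerated in a controlled way into a union of simpler Schubert subproblems whose Galois groups already contain the alternating groups on their solution sets, and if a certain numerical inequality among the solution counts holds, then the Galois group of the original problem also contains the alternating group on its solution set. Since Vakil has already verified the conclusion of Theorem~\ref{Th:one} directly for $n<16$, those cases may serve as the base of the induction.

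For the inductive step I would apply Schubert's classical special position argument: specialize one of the Schubert flags in the problem so that the associated Schubert variety of lines breaks as the union of two neighbouring Schubert varieties. This realizes the original Schubert problem as the flat limit of a union of two strictly smaller Schubert problems of lines (in $\P^n$, or in a lower-dimensional projective space after a trivial reduction). By the inductive hypothesis, each such subproblem already has Galois group containing the alternating group on its solutions, so Vakil's Criterion reduces Theorem~\ref{Th:one} to verifying the required numerical inequality between the solution counts.

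Because Schubert problems of lines are enumerated by Kostka numbers $K_{\lambda\mu}$ with $\lambda$ a two-rowed rectangle, the numerical inequality becomes a statement about such Kostka numbers. For most choices of $\lambda$ and $\mu$ the plan is to establish the inequality by an explicit combinatorial injection between the relevant sets of semistandard Young tableaux, say by prescribing how a distinguished entry is inserted or reassigned. Such a direct bijective argument should handle the overwhelming majority of cases cleanly, because the inequality is typically not tight.

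The main obstacle is the narrow family of boundary cases in which the Kostka inequality is nearly tight and no straightforward combinatorial injection can work. Here the plan is to invoke the Weyl integral formula for $SU(2)$ to express each two-rowed Kostka number as an integral over the maximal torus of an explicit trigonometric polynomial; the desired Kostka inequality then becomes a bound on a single real trigonometric integral. I would control this integral by isolating the dominant trigonometric factor, bounding the remainder by elementary estimates, and showing that the error is small enough to preserve the inequality. This analytic step is where the genuine difficulty lies; the induction, the special position degeneration, and the generic combinatorial injection are fairly routine once the setup is in place.
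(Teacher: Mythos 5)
The statement you were asked to prove is the Conjecture (that the Galois group is the \emph{full symmetric group}), but your proposal throughout proves the weaker Theorem~\ref{Th:one} (that the Galois group \emph{contains the alternating group}). This is not a minor slip: the two statements differ in exactly the place your argument cannot reach. Vakil's criterion is, by design, a one-sided tool: it propagates the property ``at least alternating'' through a degeneration, and in Case~(b) with $d_1 \ne d_2$ it rules out the subgroups below $A_d$ but can never certify the existence of an odd permutation. No amount of refining the Kostka inequality, the combinatorial injection, or the Weyl-integral estimate will promote the conclusion from $A_d \le \Gal$ to $\Gal = \calS_d$, because every ingredient you use (the special-position degeneration of Remark~\ref{R:Schubert_recursion}, the fiber diagram~\eqref{Eq:fiber_diagram}, the inequality~\eqref{Eq:ineq}) feeds into Vakil's criterion, and that criterion simply does not see the index-two subgroup.

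In fact the paper does not prove the Conjecture; it states it as open and proves only Theorem~\ref{Th:one}. The only places where the paper obtains the full symmetric group are the two infinite families in Subsection~\ref{S:symmetric}, and there the mechanism is entirely different: one directly parametrizes the solution set (via a rational normal scroll, or via eigenvalues of an element $\psi \in GL(H_1)$) and shows every configuration of solutions is realized, which exhibits transpositions in the monodromy. To attack the Conjecture in general you would need an argument of that kind in addition to what you propose: either exhibit a simple transposition by monodromy around a suitable discriminant (as in the problem of four lines, where rotating $\ell_4$ about $p$ swaps $m_1, m_2$), or invoke a criterion that can certify a transposition, such as Vakil's Remark~3.8 in~\cite{Va06b}. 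Your outline is a correct account of the paper's proof of Theorem~\ref{Th:one}; as a proof of the Conjecture it is missing the essential step, and that step cannot be supplied within the Vakil-criterion framework.
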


This conjecture (and the result of Theorem~\ref{Th:one}) does not hold for Schubert
problems in general. 
Vakil, and independently, Derksen, gave a  Schubert problem in the Grassmannian of
4-planes in 8-dimensional space whose Galois group is not the
full symmetric group on its set of solutions~\cite[\S 3.12]{Va06b}.
In~\cite{RSSS} a Schubert problem with such a deficient Galois group was found in the
manifold of flags in 6-dimensional space.
Both examples generalize to infinite families of Schubert problems with deficient Galois
groups. \smallskip

By Vakil's criterion and a special position argument of Schubert, Theorem~\ref{Th:one} reduces
to a certain inequality among Kostka numbers of two-rowed tableaux.  
For most cases, the inequality follows from a
combinatorial injection of Young tableaux. 
For the remaining cases, 
we use representation theory to rewrite these Kostka numbers as certain
trigonometric integrals~\eqref{Eq:Kostka_integral}.
In this way, the inequalities of Kostka numbers become inequalities of integrals,
which we establish using only elementary calculus.

In Section~\ref{S:one} we give some background on Galois groups, Vakil's criterion, and the
Schubert calculus of lines.
Section~\ref{S:two} we explain Schubert's recursion and formulate our proof of
Theorem~\ref{Th:one}, 
showing that it follows from an inequality of Kostka numbers, which we prove for
most Schubert problems.
We study Kostka numbers in Section~\ref{S:Kostka}, 
giving combinatorial formulas for some and the integral
  formula~\eqref{Eq:Kostka_integral}.
The technical heart of this paper is Section~\ref{S:three} in which we use
these formulas for Kostka numbers
to establish the inequality when $a_1=\dotsb=a_m=a$, which completes the
proof of Theorem~\ref{Th:one}.
  
%
\section{Background}\label{S:one}

%
\subsection{Galois groups and Vakil's criterion}

We summarize Vakil's presentation in~\cite[\S~5.3]{Va06b}.
Suppose that $\pr\colon W\to X$ is a dominant morphism of (generic) degree $d$ between
irreducible algebraic varieties of the same dimension defined over an algebraically closed
field $\K$.
We will assume here and throughout that $\pr$ is generically separable in that the
corresponding extension $\pr^*(\K(X))\subset\K(W)$ of function fields is separable.
Consider the following subscheme of the fiber product
\[
  W^{(d)}\ :=\ 
   (\underbrace{W \times_X \cdots \times_X W}_{d}) \setminus \Delta\,,
\]
where $\Delta$ is the big diagonal. 
Let $x\in X$ be a point where $\pr^{-1}(x)$ consists of $d$ distinct points,
  $\{w_1,\dotsc,w_d\}$.
Then the fiber of $W^{(d)}$ over $x$ consists of all permutations of those points,
\[
    \{ (w_{\sigma(1)},\dotsc,w_{\sigma(d)})\,\mid\, \sigma\in\calS_d\}\,,
\]
where $\calS_d$ is the symmetric group on $d$ letters.
The \demph{Galois group $\Gal_{W\to X}$} is the group of permutations $\sigma\in\calS_d$ for
which $(w_1,\dotsc,w_d)$ and $ (w_{\sigma(1)},\dotsc,w_{\sigma(d)})$ lie in the same
component of $W^{(d)}$.
The Galois group  $\Gal_{W\to X}$ is \demph{deficient} if it is not the full symmetric group
$\calS_d$, and it is \demph{at least alternating} if it is $\calS_d$ or its alternating
subgroup.

Vakil's criterion addresses how $\Gal_{W\to X}$ is affected by 
the Galois group of a restriction of $\pr\colon W\to X$ to a subvariety $Z\subset X$.
Suppose that we have a fiber diagram
 \begin{equation}\label{Eq:fiber_diagram}
  \raisebox{-22.5pt}{\begin{picture}(60,45)
   \put(5,35){$Y$} \put(15,35){$\lhra$} \put(47,35){$W$}
   \put(-5,19){$\pr$}\put(9,32){\vector(0,-1){20}}
      \put(52,32){\vector(0,-1){20}}\put(54,19){$\pr$}
   \put(5, 0){$Z$} \put(15, 0){$\lhra$} \put(47, 0){$X$}
  \end{picture}}
 \end{equation}
where $Z\hookrightarrow X$ is the closed embedding of a Cartier divisor $Z$ of $X$, $X$
is smooth in codimension one along $Z$,
and $\pr\colon Y\to Z$ is a generically separable, dominant morphism of degree $d$.
When $Y$ is either irreducible or has two components, we have the following.
\begin{enumerate}
 \item[(a)] If $Y$ is irreducible, then there is an inclusion $\Gal_{Y\to Z}$ into $\Gal_{W\to X}$.
 \item[(b)] If $Y$ has two components, $Y_1$ and $Y_2$, each of which maps dominantly to $Z$ of
   respective degrees $d_1$ and $d_2$, then there is a subgroup $H$ of 
   $\Gal_{Y_1\to Z}\times \Gal_{Y_2\to Z}$ which maps surjectively onto each factor
   $\Gal_{Y_i\to Z}$ and which 
   includes into $\Gal_{W\to X}$ (via $\calS_{d_1}\times\calS_{d_2}\hookrightarrow\calS_d$). 
\end{enumerate}

Vakil's Criterion follows by  purely group-theoretic arguments including
Goursat's Lemma.

\begin{VC}
 In {\rm Case $(a)$}, if $\Gal_{Y\to Z}$ is at least alternating, then $\Gal_{W\to X}$ is at least
 alternating.
 In {\rm Case $(b)$}, if $\Gal_{Y_1\to Z}$ and $\Gal_{Y_2\to Z}$ are at least alternating, and
 if either $d_1\neq d_2$ or $d_1=d_2=1$, then  $\Gal_{W\to X}$ is at least alternating.
\end{VC}

\begin{remark}\label{R:Vakil}
 This criterion applies to more general inclusions $Z\hookrightarrow X$ of an irreducible
 variety into $X$.
 All that is needed is that $X$ is generically smooth along $Z$, for then we may replace $X$
 by an affine open set meeting $Z$ and there are subvarieties
 $Z=Z_0\subset Z_1\subset\dotsb\subset Z_m=X$ with each inclusion $Z_{i-1}\subset Z_i$ that of a
 Cartier divisor where $Z_i$ is smooth in codimension one along $Z_{i-1}$.
 \hfill\qed
\end{remark}

%
\subsection{Schubert problems of lines}

Let $\G(1,\P^n)$ (or simply $\G(1,n)$) be the Grassmannian of lines in $n$-dimensional
projective space $\P^n$, which is an algebraic manifold of dimension $2n{-}2$.
A \demph{Schubert subvariety} is the set of lines incident on a flag of linear subspaces
$L\subset\Lambda\subset\P^n$, 
 \begin{equation}\label{Eq:SchubertVariety}
   \defcolor{\Omega(L{\subset}\Lambda)}\ :=\ \{\ell\in\G(1,n)\mid \ell\cap L\neq\emptyset
    \quad\mbox{\rm and}\quad \ell\subset\Lambda\}\,.
 \end{equation}
A \demph{Schubert problem} asks for the lines incident on a fixed, but general
collection of flags $L_1{\subset}\Lambda_1,\dotsc,L_m{\subset}\Lambda_m$.
This set of lines is described by the intersection of Schubert varieties
 \begin{equation}\label{Eq:SchubertProblem}
  \Omega(L_1{\subset}\Lambda_1)\,\cap\, \Omega(L_2{\subset}\Lambda_2)
   \,\cap\, \dotsb\,\cap\, \Omega(L_m{\subset}\Lambda_m)\,.
 \end{equation}
Schubert~\cite{Sch1886a} gave a recursion
%
for determining the number of solutions to
a Schubert problem in $\G(1,\P^n)$, when there are finitely many solutions.
The geometry behind his recursion is central to our proof of Theorem~\ref{Th:one}, and we
will present it in Subsection~\ref{S:SchubertRecursion}.

\begin{remark}\label{R:special_only}
 When $\Lambda=\P^n$, we may omit $\Lambda$ and write
 $\defcolor{\Omega_L}:=\Omega(L{\subset}\P^n)$, which is a \demph{special Schubert variety}.
 Note that $\Omega(L{\subset}\Lambda)=\Omega_L$, the latter considered as a subvariety of
 $\G(1,\Lambda)$. 
 Given $L{\subset}\Lambda$ and $L'{\subset}\Lambda'$, if we set $M:=L\cap\Lambda'$ and 
 $M':=L'\cap\Lambda$, then 
 \[
   \Omega(L{\subset}\Lambda)\,\cap\,\Omega(L'{\subset}\Lambda')\ =\ 
   \Omega_M\cap \Omega_{M'}\,,
 \]
 the latter intersection taking place in $\G(1,\Lambda\cap\Lambda')$.

 Given a Schubert problem~\eqref{Eq:SchubertProblem}, if
 $\Lambda:=\Lambda_1\cap\dotsb\cap\Lambda_m$ and 
 $L'_i:=L_i\cap\Lambda$, for $i=1,\dotsc,m$, then we may rewrite~\eqref{Eq:SchubertProblem}
 as an intersection in  $\G(1,\Lambda)$,
 \[
    \Omega_{L'_1}\,\cap\,\Omega_{L'_2}\,\cap\,\dotsb\,\cap\,\Omega_{L'_m}\,.
 \]
 We will show that it suffices
  to study intersections of special Schubert varieties.
 \hfill\qed
\end{remark}
Suppose that $\dim L =n{-}1{-}a$.
A general line in $\Omega_L$ determines and is determined by its intersections with $L$ and with
a fixed hyperplane $H$ not containing $L$.
Thus $\Omega_L$ has dimension
\[
   \dim H+\dim L\ =\ n{-}1+n{-}1{-}a\ =\ 2n{-}2{-}a\ =\ 
   \dim \G(1,n){-}a\,,
\]
and so it has codimension $a$ in $\G(1,n)$.
If $L_1,\dotsc,L_m$ are general linear subspaces of $\P^n$ with $\dim L_i=n{-}1{-}a_i$ for
$i=1,\dotsc,m$, and $a_1+\dotsb+a_m=2n{-}2=\dim\G(1,n)$, then the intersection
 \begin{equation}\label{Eq:special_intersection}
   \Omega_{L_1}\,\cap\,\Omega_{L_2}\,\cap\,\dotsb\,\cap\,\Omega_{L_m}
 \end{equation}
is transverse and therefore zero-dimensional.
Over fields of characteristic zero, transversality follows from Kleiman's Transversality
Theorem~\cite{Kl74} while in positive characteristic, it is Theorem~E in~\cite{So97}.
By this transversality, the number of points in the
intersection~\eqref{Eq:special_intersection} does not depend upon the choice of general
$L_1,\dotsc,L_m$, but only on the numbers $(a_1,\dotsc,a_m)$. 
We call $\defcolor{\adot}:=(a_1,\dotsc,a_m)$ the \demph{type} of the Schubert
intersection~\eqref{Eq:special_intersection}. 

Observe that we do not need to specify $n$.
Given positive integers $\adot=(a_1,\ldots,a_m)$ whose sum is even, 
set $\defcolor{n(\adot)}:=\frac{1}{2}(a_1+\cdots+a_m+2)$.
Henceforth, a Schubert problem will be denoted by a list $\adot$ of positive integers with
even sum. 
It is \demph{valid} if $a_i\leq n(\adot){-}1$ (this is forced by $\dim L_i\geq 0$), which
is equivalent to the numbers $a_1,\ldots,a_m$ being the sides of a (possibly degenerate) polygon.
If $\adot$ is a valid Schubert problem, then we set \defcolor{$K(\adot)$} to be the number
of points in a general intersection~\eqref{Eq:special_intersection} of type $\adot$,
and if $\adot$ is invalid, we set $K(\adot):=0$.

This intersection number $K(\adot)$ is a \demph{Kostka number}, which is
the number of Young tableaux of shape $(n(\adot){-}1,n(\adot){-}1)$ and content
$(a_1,\ldots,a_m)$~\cite[p.25]{Fu97}.   
If $\adot$ is invalid, then there are no such tableaux, which is consistent with our
declaration that $K(\adot)=0$.
These are arrays consisting of two rows of integers, each of length
$n(\adot){-}1$ such that the integers
increase weakly across each row and strictly down each column, and
there are $a_i$
occurrences of $i$ for each $i=1,\ldots,m$.
Let \defcolor{$\calK(\adot)$} be the set of such tableaux.
For example, here are the five Young tableaux in $\calK(2,2,1,2,3)$,
showing that $K(2,2,1,2,3)=5$.
\begin{equation}\label{Eq11:tableaux}
 \raisebox{-9pt}{%
   \Kosfi{1}{1}{2}{2}{3}{4}{4}{5}\qquad
   \Kosfi{1}{1}{2}{2}{4}{3}{4}{5}\qquad
   \Kosfi{1}{1}{2}{3}{4}{2}{3}{5}\qquad
   \Kosfi{1}{1}{2}{4}{4}{2}{3}{5}\qquad
   \Kosfi{1}{1}{3}{4}{4}{2}{2}{5}}
\end{equation}

%
\subsection{Reduced Schubert problems}\label{SS:reduced}
It suffices to consider only certain types of Schubert problems.
Let $\adot$ be a (valid) Schubert problem with $a_1+a_2\geq n(\adot)$ and set
$\defcolor{n}:=n(\adot)$. 
Suppose that $L_1,\dotsc,L_m\subset\P^{n}$ are general linear subspaces with 
$\dim L_i=n{-}1{-}a_i$ for $i=1,\dotsc,m$.
Since $a_1+a_2>n{-}1$, the subspaces $L_1$ and $L_2$ are disjoint, and so every line
$\ell$ in 
\[
  \Omega_{L_1}\cap \Omega_{L_2}\ =\ 
    \{\ell\in\G(1,n)\mid \ell\cap L_i\neq \emptyset\ \mbox{for}\ i=1,2\}
\]
is spanned by its intersections with $L_1$ and $L_2$.
Thus $\ell$ lies in the linear span $\overline{L_1,L_2}$,
which is a proper linear subspace of $\P^n$.
Let $\Lambda$ be a general hyperplane containing $\overline{L_1,L_2}$.

If we set $\defcolor{L'_i}:=L_i\cap\Lambda$ for $i=1,\dotsc,m$, then we have
 \begin{equation}\label{Eq:reduced_Schubert_Problem}
   \Omega_{L_1}\cap \Omega_{L_2}\cap\dotsb\cap \Omega_{L_m}\ =\ 
     \Omega_{L'_1}\cap \Omega_{L'_2}\cap\dotsb\cap \Omega_{L'_m}\,,
 \end{equation}
the latter intersection in $\G(1,\Lambda)\simeq\G(1,n{-}1)$.
For $i=1,2$, we have $L'_i=L_i$ and so
\[
   \dim L'_i\ =\ n{-}1{-}a_i\ =\ (n{-}1){-}1{-}(a_i{-}1)\ =\ 
     \dim \Lambda {-}1{-}(a_i{-}1)\,,
\]
and if $i>2$, then 
 \begin{equation}\label{Eq:reduction}
   \dim L'_i\ =\ n{-}1{-}a_i-1\ =\ (n{-}1){-}a_i\ =\ 
     \dim \Lambda {-}1{-}a_i\,.
 \end{equation}
Thus the righthand side of~\eqref{Eq:reduced_Schubert_Problem} is a Schubert problem of type 
$\defcolor{\adot'}:=(a_1{-}1,a_2{-}1,a_3,\dotsc,a_m)$, and so we have 
\[
   K(a_1,\dotsc,a_m)\ =\ K(a_1{-}1,a_2{-}1,a_3,\dotsc,a_m)\,,
\]
where $a_1'+a_2'-n(\adot')<a_1+a_2-n(\adot)$.
We may also see this combinatorially: the condition $a_1+a_2\geq n(\adot)$ implies that the
first column of every tableaux in $\calK(\adot)$ consists of a 1 on top of a 2.  
Removing this column gives a tableaux in $\calK(\adot')$, and this defines a bijection
between these two sets of tableaux.

We say that a Schubert problem $\adot$ is \demph{reduced} if 
$a_i+a_j< n(\adot)$ for any $i<j$.
Applying the previous procedure recursively shows that 
every Schubert problem may be recast as an equivalent reduced Schubert problem.

%
\subsection{Galois groups of Schubert problems}\label{SS:Galois}

Given a Schubert problem $\adot$, let $n:=n(\adot)$, and set
\[
  \defcolor{X}\ :=\ \{(L_1,\dotsc,L_m)\mid L_i\subset \P^n
  \mbox{\ is a linear space of dimension }n{-}1{-}a_i\}\,,
\]
which is a product of Grassmannians, and hence smooth.
Consider the \demph{total space} of the Schubert problem $\adot$, 
\[
  \defcolor{W}\ := \{(\ell,L_1,\dotsc,L_m)\in  \G(1,n)\times X \mid 
     \ell\cap L_i\neq\emptyset\,,\ i=1,\dotsc,m\}\,.
\]
The projection map $W\to\G(1,n)$ to the first coordinate realizes $W$ as a fiber bundle
of $\G(1,n)$ with irreducible fibers.
As $\G(1,n)$ is irreducible, $W$ is irreducible.

Let $\pr\colon W\to X$ be the other projection.
Its fiber over a point $(L_1,\dotsc,L_m)\in X$ is
 \begin{equation}\label{Eq:fiber}
   \pr^{-1}(L_1,L_2,\dotsc,L_m)\ =\ \Omega_{L_1}\cap \Omega_{L_2}\cap\dotsb\cap \Omega_{L_m}\,.
 \end{equation}
In this way, the map $\pr\colon W\to X$ contains all intersections of Schubert varieties of
type $\adot$.
As the general Schubert problem is a transverse intersection containing $K(\adot)$
points, $\pr$ is generically separable, and it is a dominant (in fact surjective) map of
degree $K(\adot)$.  

\begin{definition}
  The Galois group \defcolor{$\Gal(\adot)$} of the Schubert problem of type $\adot$
  is the Galois group $\Gal_{W\to X}$,
  where $W\to X$ is the projection $\pr$ defined above.
 \hfill\qed
\end{definition}

\begin{remark}\label{R:reductionsOK}
 These two reductions,
 that a general Schubert problem on $\G(1,n)$ is equivalent
 to one that only involves special Schubert varieties (Remark~\ref{R:special_only}), and is
 furthermore equivalent to a reduced Schubert problem (Subsection~\ref{SS:reduced}), do not
 affect the corresponding Galois groups.
 The reason is the same for both reductions, so we only explain it for that of
 Subsection~\ref{SS:reduced}. 

 Suppose that $\adot=(a_1,\dotsc,a_m)$ is a valid, but non-reduced Schubert problem with
 $a_1+a_2\geq n:=n(\adot)$.
 Let $\pr\colon W\to X$ be the family of all instances of Schubert problems of type $\adot$
 as above.
 Fix a hyperplane $\Lambda\subset\P^n$ and let $\defcolor{Z}\subset X$ be
\[  
   \{(L_1,\dotsc,L_m)\in X\,\mid\, L_1,L_2\subset\Lambda\}\,,
\]
 which is smooth.
 By setting $\defcolor{Y}:=W|_Z$ we obtain a fiber diagram as
 in~\eqref{Eq:fiber_diagram} where $Y\to Z$ is the family of all
 Schubert problems of type $\adot'=(a_1{-}1,a_2{-}1,a_3,\dotsc,a_m)$ in $\G(1,\Lambda)$,
 as in Subsection~\ref{SS:reduced}.

 As $Y$ is irreducible we have inclusions $\Gal_{Y\to Z}\hookrightarrow\Gal_{W\to X}$ and
 therefore $\Gal(\adot')\subset\Gal(\adot)$.
 Thus, if $\Gal(\adot')$ is at least alternating, then $\Gal(\adot)$ is at least
 alternating.  

 Moreover, these Galois groups coincide. 
 Note that $\PGL(n{+}1)$ acts on $\P^n$ and thus diagonally on $X$ and 
 the orbit of $Z$ is dense in $X$.
 This action extends to $W\to X$ and to $W^{(d)}\to X$.
 Thus if $z\in Z$ is a point where $\pr^{-1}(z)$ consists of $d$ distinct points, 
 $\{w_1,\dotsc,w_d\}$ and $\sigma$ is a permutation, then the points
 $(w_1,\dotsc,w_d)$ and $ (w_{\sigma(1)},\dotsc,w_{\sigma(d)})$ lie in the same connected
 component of $Y^{(d)}$ if and only if they lie in the same connected component of $W^{(d)}$.  
\hfill\qed
\end{remark}

%
\section{Galois groups of Schubert problems of lines}\label{S:two}

We explain how a special position argument of Schubert together with
Vakil's criterion reduces the proof of Theorem~\ref{Th:one} to establishing an inequality
of Kostka numbers.
In many cases, the inequality follows from simple counting.
The remaining cases are treated in Section~\ref{S:three}.
We also give two infinite families of Schubert problems whose Galois groups
are the full symmetric groups.

%
\subsection{Schubert's degeneration}\label{S:SchubertRecursion}

We begin with a simple observation due to Schubert~\cite{Sch1886a}.

\begin{lemma}\label{L:schubert}
 Let $b_1, b_2$ be positive integers with $b_1+b_2\leq n{-}1$,
 and suppose that $M_1,M_2\subset\P^n$ are linear subspaces with 
 $\dim M_i=n{-}1{-}b_i$ for $i=1,2$.
 If $M_1$ and $M_2$ are in special position in that their linear span is a hyperplane
 $\Lambda=\overline{M_1,M_2}$, then  
 \begin{equation}\label{Eq:degeneration}
   \Omega_{M_1}\cap \Omega_{M_2}\ =\ 
    \Omega_{M_1\cap M_2}\ \bigcup\ \Omega(M_1{\subset}\Lambda)\cap \Omega_{M'_2}\,,
 \end{equation}
 where $M'_2$ is any linear subspace of dimension $n{-}b_2$ of\/ $\P^n$ with
 $M'_2\cap\Lambda=M_2$.
 Furthermore, the intersection $\Omega_{M_1}\cap \Omega_{M_2}$ is generically
 transverse,~\eqref{Eq:degeneration} is its irreducible decomposition, and  
 the second intersection of Schubert varieties is also generically transverse.
\end{lemma}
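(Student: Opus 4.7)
The plan is to first establish set-theoretic equality in~\eqref{Eq:degeneration} by a dichotomy on whether $\ell\subset\Lambda$, then show each component on the right-hand side is irreducible of the expected dimension $2n-2-b_1-b_2$, and finally verify generic transversality on each component.

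For set-theoretic equality, given $\ell\in\Omega_{M_1}\cap\Omega_{M_2}$, if $\ell\not\subset\Lambda$ then $\ell\cap\Lambda$ is a single point which must coincide with both $\ell\cap M_1$ and $\ell\cap M_2$ (each nonempty and contained in $\Lambda$), hence lies in $M_1\cap M_2$, placing $\ell\in\Omega_{M_1\cap M_2}$. If $\ell\subset\Lambda$ then $\ell\in\Omega(M_1{\subset}\Lambda)$ and $\ell$ meets $M_2\subset M'_2$, so $\ell\in\Omega(M_1{\subset}\Lambda)\cap\Omega_{M'_2}$. The reverse inclusion is immediate once one observes that any $\ell\subset\Lambda$ meeting $M'_2$ meets it inside $M'_2\cap\Lambda=M_2$.

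For dimension and irreducibility, note that $M_1,M_2\subset\Lambda$ span the hyperplane $\Lambda$, so $\dim(M_1\cap M_2)=n-1-b_1-b_2$ and $\Omega_{M_1\cap M_2}$ is an irreducible special Schubert variety of dimension $2n-2-b_1-b_2$ in $\G(1,n)$. The second component, viewed inside $\G(1,\Lambda)$, is the closure of the image of the dominant, generically injective rational map $M_1\times M_2\dashrightarrow\G(1,\Lambda)$ sending $(p_1,p_2)\mapsto\overline{p_1p_2}$, so it is irreducible of the same dimension $(n-1-b_1)+(n-1-b_2)=2n-2-b_1-b_2$.

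For generic transversality, I would work separately on each component. At a generic $\ell\in\Omega_{M_1\cap M_2}$, $\ell\cap\Lambda$ is a single point in $M_1\cap M_2$ and the standard tangent space description of $\Omega_{M_i}$ reduces the transversality of $\Omega_{M_1}$ and $\Omega_{M_2}$ at $\ell$ to the general position of $M_1$ and $M_2$ in $\P^n$ away from $\Lambda$. At a generic $\ell$ of the second component, the intersection inside $\G(1,\Lambda)$ is transverse by Kleiman's theorem applied to $\Omega(M_1{\subset}\Lambda)$ and $\Omega(M_2{\subset}\Lambda)$, whose defining subspaces are in general position within $\Lambda$; the extra transverse direction needed to lift to $\G(1,n)$ is supplied by $M'_2\not\subset\Lambda$. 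The main obstacle is carrying out this second transversality check cleanly, since here the $M_i$ lie in the hyperplane $\Lambda$ and so are globally non-generic in $\P^n$; an attractive alternative is to appeal to Cohen--Macaulayness of each $\Omega_{M_i}$, so that the set-theoretic decomposition into two irreducible pieces of the expected codimension $b_1+b_2$ forces the scheme-theoretic intersection to be unmixed, and then deduce generic reducedness on each component by exhibiting a smooth point in local coordinates.
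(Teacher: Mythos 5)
Your set-theoretic dichotomy on whether $\ell\subset\Lambda$ is exactly the explanation the paper gives for~\eqref{Eq:degeneration}, and your dimension and irreducibility counts for the two components are correct. Note, though, that the paper does not itself prove the transversality and irreducible-decomposition assertions of this lemma; it cites Lemma~2.4 of~\cite{So97} for them. So you are attempting to supply details the paper deliberately omits, and it is precisely there that your argument is incomplete.

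Your transversality sketch has genuine gaps. For the first component, the phrase ``reduces the transversality\ldots to the general position of $M_1$ and $M_2$ in $\P^n$ away from $\Lambda$'' is confused: $M_1$ and $M_2$ are \emph{not} in general position in $\P^n$ (both lie in $\Lambda$). What must actually be checked is that at a generic $\ell\in\Omega_{M_1\cap M_2}$, meeting $M_1\cap M_2$ in a single point and not contained in $\Lambda$, the two Schubert tangent spaces at $\ell$ together span the tangent space of $\G(1,n)$; this is a short linear-algebra verification using $\ell\not\subset\Lambda$, but you assert it rather than perform it. For the second component, Kleiman's theorem does not directly apply as you invoke it: $M_1$ and $M_2$ are fixed subspaces of $\Lambda$, and the lemma assumes only $\overline{M_1,M_2}=\Lambda$, whereas Kleiman yields transversality for a general translate. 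The repair is to observe that $\mathrm{PGL}(\Lambda)$ acts transitively on pairs of subspaces of the given dimensions that span $\Lambda$, so generic transversality for one such pair gives it for all; this observation is missing, as is any treatment of positive characteristic, where Kleiman's theorem is unavailable and the paper instead relies on Theorem~E of~\cite{So97}. Finally, the Cohen--Macaulay alternative does give unmixedness of the scheme-theoretic intersection, but its concluding step of ``exhibiting a smooth point in local coordinates'' is the very tangent-space computation you have not carried out. In summary, the set-theoretic part is complete and agrees with the paper's sketch, while the transversality part remains a program rather than a proof.
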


The reason for this decomposition is that if $\ell$ meets both $M_1$ and $M_2$, then either it
meets $M_1\cap M_2$ or it lies in their linear span (while also meeting both $M_1$ and $M_2$).
This lemma, particularly the transversality statement, is proven in~\cite[Lemma~2.4]{So97}.

\begin{remark}\label{R:Schubert_recursion}
 Suppose that $\adot$ is a reduced Schubert problem.
 Set $n:=n(\adot)$.
 Let $L_1,\dotsc,L_m$ be linear subspaces with $\dim L_i=n{-}a_i{-}1$ which are in general
 position in $\P^n$, except that $L_{m-1}$ and $L_m$ span a hyperplane $\Lambda$.
 By Lemma~\ref{L:schubert} we have 
 \begin{multline}\label{Eq:Y1Y2}
  \quad
   \Omega_{L_1}\cap\dotsb\cap \Omega_{L_m}\ =\,\  
   \Omega_{L_1}\cap\dotsb\cap \Omega_{L_{m-2}}\;\cap\; \Omega_{L_{m-1}\cap L_m}\ \\
   \bigcup\ 
   \Omega_{L_1}\cap\dotsb\cap \Omega_{L_{m-2}}\;\cap\; 
     \Omega(L_{m-1}{\subset}\Lambda)\cap \Omega_{L'_m}\,,
  \quad
 \end{multline}
 where $L'_m\cap\Lambda=L_m$, and so $L'_m$ has dimension $n{-}a_m$.

 The first intersection on the righthand side of~\eqref{Eq:Y1Y2} has type
 $(a_1,\dotsc,a_{m-2},a_{m-1}{+}a_m)$ and the second, once we apply the reduction of
 Remark~\ref{R:special_only}, has type $(a_1,\dotsc,a_{m-2},\, a_{m-1}{-}1,a_m{-}1)$. 
 This gives Schubert's recursion for Kostka numbers
 \begin{equation}\label{E:combinatorial_recursion}
  K(a_1,\dotsc,a_m)\ =\ 
    K(a_1,\dotsc,a_{m-2},\,a_{m-1}+a_m)\ +\ 
    K(a_1,\dotsc,a_{m-2},\, a_{m-1}{-}1,a_m{-}1)\,.
 \end{equation}
\end{remark}

As $\adot$ is reduced, the two Schubert problems obtained are both valid.
This recursion holds even if $\adot$ is not reduced.
The first term in~\eqref{E:combinatorial_recursion} may be zero, for
$(a_1,\dotsc,a_{m-2},\,a_{m-1}+a_m)$ may not be valid (in this case, 
$L_{m-1}\cap L_m=\emptyset$).

We consider this recursion for $K(2,2,1,2,3)$.
The first tableau in~(\ref{Eq11:tableaux}) has both 4s in its second row (along with its
5s), while the remaining four tableaux have last column consisting of a 4 on top of a 5.
If we replace the 5s by 4s in the first tableau and erase the last column in the remaining
four tableaux, we obtain
\[
   \Kosfi{1}{1}{2}{2}{3}{4}{4}{4}\qquad\qquad
   \Kosfo{1}{1}{2}{2}{3}{4}{5}\qquad
   \Kosfo{1}{1}{2}{3}{2}{3}{5}\qquad
   \Kosfo{1}{1}{2}{4}{2}{3}{5}\qquad
   \Kosfo{1}{1}{3}{4}{2}{2}{5}\ \,
\]
which shows that $K(2,2,1,2,3)=K(2,2,1,5)+K(2,2,1,1,2)$.
We sometimes use exponential notation for the sequences $\adot$, e.g.\ 
$(1^2,2^3,3)=(1,1,2,2,2,3)$.

In Subsection~\ref{S:counting}, we use this recursion to prove the following lemmas.

\begin{lemma}\label{L:small_m}
 Suppose that  $\adot$ is a valid Schubert problem.
 Then $K(\adot)\neq 0$ and $m>1$.
 If $m=2$ or $m=3$, then $K(\adot)=1$.
 If $m=4$, then
 \begin{equation}\label{Eq:m=4}
   K(\adot)\ =\ 1\ +\ \min\{ a_i\,,\ n(\adot){-}1{-}a_j\mid i,j=1,\dotsc,4\}\,.
 \end{equation}
 There are no reduced Schubert problems with $m< 4$.
 If $\adot$ is reduced and $m=4$, then $a_1=a_2=a_3=a_4$, and we have 
 $K(a^4)=1+a$.
\end{lemma}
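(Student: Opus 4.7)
The proof proceeds by case analysis on $m$, with induction in the $m=4$ case. First I would dispense with $m=1$: validity requires $a_1 \leq n(\adot) - 1 = a_1/2$, which is impossible for a positive integer $a_1$, so $m \geq 2$. For nonvanishing $K(\adot) \geq 1$ (for any valid $\adot$), I would construct a tableau explicitly: list the content as a weakly increasing sequence $b_1 \leq \dotsb \leq b_{2(n-1)}$ and place the first $n-1$ entries in row 1 and the last $n-1$ in row 2. Rows are weakly increasing by construction, and since each value $v$ occupies a run of $a_v \leq n-1$ consecutive positions, no column $j$ has the same entry in both rows, giving column-strictness $b_{n-1+j} > b_j$.

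For $m=2$, validity together with $a_1 + a_2 = 2(n-1)$ forces $a_1 = a_2 = n-1$, and there is a unique tableau (row 1 all $1$'s, row 2 all $2$'s). For $m=3$, no $3$ can occur in row 1 because there is nothing larger to sit below it; consequently row 1 must be $a_1$ ones followed by $n-1-a_1$ twos, and this uniquely determines row 2. The inequality $a_1 + a_2 \geq n-1$, which ensures there are enough twos to complete row 1, follows from $a_1 + a_2 + a_3 = 2(n-1)$ together with $a_3 \leq n-1$.

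For $m=4$, I would sort the entries in weakly decreasing order $c_1 \geq c_2 \geq c_3 \geq c_4$ and apply the recursion \eqref{E:combinatorial_recursion} to the two smallest entries:
\begin{equation*}
 K(c_1, c_2, c_3, c_4) \;=\; K(c_1, c_2, c_3+c_4) \;+\; K(c_1, c_2, c_3-1, c_4-1).
\end{equation*}
The first summand is an $m=3$ problem with the same $n(\adot)$, valid because $c_3 + c_4 \leq n(\adot)-1$ (the two smallest pair), so it equals $1$. The second summand, when nonzero, is an $m=4$ problem with parameter $n' = n(\adot)-1$ to which I would apply the inductive hypothesis on $\sum a_i$, also handling the degenerate cases $c_4 = 1$ (drops to $m=3$) and $c_1 = n(\adot)-1$ (invalid, giving $K(\adot) = 1 = 1 + \min\{c_4, 0\}$). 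In the generic situation the inductive formula gives $1 + \min\{c_4-1, n(\adot)-2-c_1\}$, and adding the leading $1$ yields $1 + \min\{c_4, n(\adot)-1-c_1\}$ after a short case analysis on the sign of $c_1 + c_4 - (n(\adot)-1)$.

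For the structural assertions: summing over the $\binom{m}{2}$ pair-sums gives $\sum_{i<j}(a_i+a_j) = 2(m-1)(n-1)$, so for $m=2,3$ the maximum pair-sum is at least $n$ (with the small cases $n \leq 3$ checked by hand), precluding reducedness. For a reduced $\adot$ with $m=4$, the bound $a_i + a_j \leq n-1$ on each of the six pairs sums to at most $6(n-1)$, yet the total is exactly $6(n-1)$, so every pair sum equals $n-1$; the resulting linear system forces $a_1 = a_2 = a_3 = a_4 = (n-1)/2 =: a$, and the $m=4$ formula just established yields $K(a^4) = 1 + \min\{a,a\} = 1+a$. The main obstacle is keeping the case analysis in the $m=4$ induction organized, as it must simultaneously handle the vanishing of the recursion term when $c_1 = n-1$, the drop in $m$ when $c_4 = 1$, and the two sign regimes in the $\min$; everything else is a direct structural argument.
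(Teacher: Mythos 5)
Your proof is correct, but it takes a genuinely different route from the paper's in several places, and the comparison is instructive. For nonvanishing, you give a clean explicit construction (sort the content, split it into two rows, and use $a_v \le n-1$ to force column-strictness), whereas the paper never isolates this as a separate claim---it emerges implicitly from the recursive computations. For $m=2$ and $m=3$ you argue directly with tableaux (uniqueness of the filling), while the paper argues geometrically for $m=2$ (lines through two points) and by iterated column-stripping for $m=3$, reducing to $m=2$. The sharpest divergence is at $m=4$: the paper first disposes of the reduced case by showing all $a_i$ are equal and running the recursion $K(a^4)=1+K((a{-}1)^4)$, and then, in the non-reduced case, sorts increasingly, reduces to $\adot'=(a_1,a_2,a_3{-}1,a_4{-}1)$, and carries out a somewhat delicate case analysis showing $\min\{\alpha(\adot),\beta(\adot)\}=\min\{\alpha(\adot'),\beta(\adot')\}$ before inducting. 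You instead sort decreasingly, apply Schubert's recursion~\eqref{E:combinatorial_recursion} to the two smallest entries, note that the $m=3$ term contributes exactly $1$ when $c_1\le n{-}1$, and induct on $\sum a_i$; the identity $1+1+\min\{c_4{-}1,\,n{-}2{-}c_1\}=1+\min\{c_4,\,n{-}1{-}c_1\}$ then absorbs the bookkeeping in one line, with the two terminations ($c_4{-}k=0$ versus $c_1>n{-}k{-}1$) landing exactly on the two branches of the $\min$. Your unified induction is arguably cleaner than the paper's split into reduced/non-reduced with the $\alpha/\beta$ invariance, at the cost of needing slightly more care about when the second recursion term vanishes. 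Your pair-sum counting argument for the structural assertions ($6(n{-}1)$ total forces equality in the reduced $m=4$ case) is the same idea the paper uses implicitly.
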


\begin{lemma}\label{L:m=5}
 Let $a=2b$ with $b\geq 1$ be even.
 Then
\[ 
   K(a^3,2a)\ =\ 1{+}b
    \qquad\mbox{and}\qquad
   K(a^3,(a{-}1)^2)\ =\ \frac{(5b^2+3b)}{2}\,.
\]
\end{lemma}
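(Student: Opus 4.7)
My plan is to obtain both equalities by combining Lemma~\ref{L:small_m} with Schubert's recursion~\eqref{E:combinatorial_recursion}.

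For the first identity $K(a^3,2a)=1+b$, the relevant Schubert problem has $m=4$ and $n(\adot)=5b+1$. I would apply formula~\eqref{Eq:m=4} directly: the candidates for the minimum are $\{a,2a\}=\{2b,4b\}$ and $\{n(\adot){-}1{-}a,\, n(\adot){-}1{-}2a\}=\{3b,b\}$, whose minimum is $b$. This gives $K(a^3,2a)=1+b$.

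For the second identity, set $f(k):=K(a^3,(a{-}k)^2)$ for $0\leq k\leq a$. Applying~\eqref{E:combinatorial_recursion} to the last two entries yields
\[
   f(k)\ =\ K(a^3,\,2a{-}2k)\,+\,f(k+1)
\]
for $1\leq k\leq a{-}1$, with base case $f(a)=K(a^3,0,0)=K(a^3)=1$: the two trivial conditions with $a_i=0$ impose no constraint on a line, and the $m=3$ case of Lemma~\ref{L:small_m} gives $K(a^3)=1$. Iterating this recurrence then produces
\[
   K(a^3,(a{-}1)^2)\ =\ 1\,+\,\sum_{k=1}^{a-1} K(a^3,\,2a{-}2k)\,.
\]

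Each summand is an $m=4$ Kostka number. For the tuple $(a,a,a,2a{-}2k)$ we have $n{-}1=5b{-}k$, so~\eqref{Eq:m=4} gives
\[
   K(a^3,\,2a{-}2k)\ =\ 1\,+\,\min\{2b,\;4b{-}2k,\;3b{-}k,\;b{+}k\}\,.
\]
Pairwise comparison of these four quantities shows that the minimum equals $b+k$ for $1\leq k\leq b$ and $4b{-}2k$ for $b\leq k\leq 2b{-}1$ (the two formulas agreeing at $k=b$). Splitting the sum at $k=b$ and evaluating the two arithmetic progressions gives
\[
   \sum_{k=1}^{b}(b{+}k{+}1)\,+\,\sum_{k=b+1}^{2b-1}(4b{-}2k{+}1)\ =\ \frac{3b(b+1)}{2}\,+\,(b^2{-}1)\,,
\]
and adding the $+1$ from the base case yields $(5b^2+3b)/2$, as required. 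The main bookkeeping step I expect is verifying the piecewise identification of the minimum at the transition $k=b$; the remaining arithmetic is elementary.
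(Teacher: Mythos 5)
Your proposal is correct and follows essentially the same strategy as the paper: repeatedly apply Schubert's recursion to $K(a^3,(a{-}1)^2)$ to express it as a sum of $m=4$ Kostka numbers, then evaluate each using the minimum formula~\eqref{Eq:m=4} of Lemma~\ref{L:small_m}, splitting the sum at the transition point. The paper folds the base-case contribution $K(a^3)=1$ into the sum as the $j=a$ term while you peel it off explicitly, but the bookkeeping and outcome are identical.
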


\subsection{Proof of Theorem~\ref{Th:one}}
We will use Vakil's criterion and Schubert's degeneration 
to deduce Theorem~\ref{Th:one} from a key combinatorial lemma.
A \demph{rearrangement} of a Schubert problem $(a_1,\dotsc,a_m)$ is simply a listing of the
integers $(a_1,\dotsc,a_m)$ in some order.

\begin{lemma}\label{L:induction}
 Let $\adot$ be a reduced Schubert problem involving $m\geq 4$ integers.
 When $\adot \neq(1,1,1,1)$, it has a rearrangement $(a_1,\dotsc,a_m)$
 such that 
 \begin{equation}\label{Eq:ineq}
     K(a_1,\dotsc,a_{m-2},\,a_{m-1}{+}a_m)\ \neq\ 
    K(a_1,\dotsc,a_{m-2},\, a_{m-1}{-}1,a_m{-}1)\,,
 \end{equation}
 and both terms are nonzero.
 When $\adot=(1,1,1,1)$, this$~\eqref{Eq:ineq}$ is an equality with both terms equal to $1$. 
\end{lemma}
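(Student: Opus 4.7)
The plan is to reformulate Lemma~\ref{L:induction} via Schubert's recursion~\eqref{E:combinatorial_recursion} and then prove the resulting inequality by a case analysis on $m$ and on whether the $a_i$ are all equal, invoking the explicit Kostka computations of Lemmas~\ref{L:small_m} and~\ref{L:m=5} when they apply and deferring the fully uniform case $\adot=(a^m)$ to the integral techniques of Section~\ref{S:three}.

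First I would check that both Kostka numbers in~\eqref{Eq:ineq} are automatically positive. Since $\adot$ is reduced, $a_{m-1}+a_m\leq n(\adot){-}1$, so $(a_1,\dotsc,a_{m-2},a_{m-1}+a_m)$ is a valid Schubert problem and its Kostka number is at least $1$; and the decremented problem $(a_1,\dotsc,a_{m-2},a_{m-1}{-}1,a_m{-}1)$ is also valid, since its $n$ drops by $1$ while each remaining entry stays bounded by the new $n{-}1$ (again by reducedness), so its Kostka number is at least $1$ as well. Hence~\eqref{Eq:ineq} is equivalent to showing the two summands in~\eqref{E:combinatorial_recursion} differ.

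Next I would dispatch the small cases. For the excluded base case $\adot=(1,1,1,1)$ one has $K(1,1,2)=1=K(1,1)$, confirming the concluding equality statement. When $m=4$ and $\adot\neq(1,1,1,1)$, Lemma~\ref{L:small_m} forces $\adot=(a^4)$ with $a\geq 2$; then~\eqref{Eq:m=4} yields $K(a,a,2a)=1$ and $K(a,a,a{-}1,a{-}1)=a$, which are distinct. When $\adot=(a^5)$, the parity of $5a$ forces $a=2b$, and Lemma~\ref{L:m=5} gives $K(a^3,2a)=1+b$ and $K(a^3,(a{-}1)^2)=\tfrac12(5b^2+3b)$; their difference $\tfrac12(5b^2+b-2)$ is positive for every $b\geq 1$.

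The remaining possibilities split into (i) $m\geq 5$ with the $a_i$ not all equal, and (ii) $\adot=(a^m)$ with $m\geq 6$. For (i) I would exploit the freedom of rearrangement: because some two entries differ, there is a rearrangement with $a_{m-1}\neq a_m$, and one constructs an explicit combinatorial injection between the two tableau sets counted by~\eqref{Eq:ineq}---adjoining or deleting a distinguished last column that records the chosen pair---whose failure of surjectivity is witnessed by an extremal tableau forbidden by the asymmetry $a_{m-1}\neq a_m$. For (ii) no such combinatorial handle remains, and I would fall back on the integral formula for Kostka numbers established in Section~\ref{S:Kostka}, writing $K(a^{m-2},2a)-K(a^{m-2},a{-}1,a{-}1)$ as a single trigonometric integral and bounding it away from zero. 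The main obstacle is precisely case (ii): the loss of every combinatorial asymmetry in the totally uniform problem forces a passage from the discrete to the analytic side, and controlling the sign of this integral by elementary calculus is the technical content of Section~\ref{S:three}.
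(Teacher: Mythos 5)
Your reduction to nonvanishing of both terms, the treatment of $(1,1,1,1)$, the $m=4$ case (forced to be $(a^4)$ by reducedness), the $(a^5)$ computation via Lemma~\ref{L:m=5}, and the deferral of $\adot=(a^m)$ to the integral machinery of Section~\ref{S:three} all match the paper. The gap is in your case (i), where the $a_i$ are not all equal. You propose a direct combinatorial injection between the two tableau sets counted by the two sides of~\eqref{Eq:ineq}, with failure of surjectivity ``witnessed by the asymmetry $a_{m-1}\neq a_m$.'' This does not work, and for a concrete reason: the two sides of~\eqref{Eq:ineq} count tableaux of \emph{different} shapes and content lengths (the merged problem has shape $(n{-}1,n{-}1)$ and $m{-}1$ content parts while the decremented problem has shape $(n{-}2,n{-}2)$ and $m$ content parts), and via Schubert's recursion they arise as \emph{complementary} subsets of the same $\calK(\adot)$, not as one sitting inside the other. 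Moreover, the sign of their difference is not determined by which of $a_{m-1},a_m$ is larger; indeed even in the fully uniform case $a=2$ the sign of the analogous difference flips at $\mu=14$ (Table~\ref{T:inequality_a=2}), so there is no a priori direction for an injection.

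What the paper actually does, and what your plan misses, is to compare two merged problems of the \emph{same} shape and size. After rearranging so that $\alpha\leq\beta\leq\gamma$ with $\alpha<\gamma$, Lemma~\ref{L:unequal_inequality} gives a genuine (non-surjective) injection
\[
\calK(b_1,\ldots,b_\mu,\alpha,\beta{+}\gamma)\ \lhook\joinrel\relbar\joinrel\rightarrow\ \calK(b_1,\ldots,b_\mu,\gamma,\beta{+}\alpha),
\]
which is possible precisely because both sets have the same shape and the same number of content parts, differing only in one entry each. The key step is then to apply Schubert's recursion to $K(\adot)$ \emph{twice}, once putting $(\beta,\gamma)$ last and once putting $(\beta,\alpha)$ last:
\[
K(b_\bullet,\alpha,\beta{+}\gamma) + K(b_\bullet,\alpha,\beta{-}1,\gamma{-}1)\ =\ K(\adot)\ =\ K(b_\bullet,\gamma,\beta{+}\alpha) + K(b_\bullet,\gamma,\beta{-}1,\alpha{-}1).
\]
Since the first summands differ strictly and all four terms are nonzero (valid Schubert problems), at least one of the two rearrangements must have unequal summands, which is exactly~\eqref{Eq:ineq}. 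This non-constructive ``compare two recursions'' trick is what lets the argument sidestep the fact that no single rearrangement need admit a one-directional injection. You would need to incorporate this idea for the non-uniform case to go through.

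Your high-level plan for the uniform case is consistent with the paper, though be aware that it requires three separate treatments ($a=1$ via the hook-length formula, $a=2$ where the sign flips and one needs the table for $\mu\leq 13$ plus an inductive integral estimate for $\mu\geq 14$, and $a\geq 3$ where the behavior is uniform).
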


The proof of Lemma~\ref{L:induction} will occupy part of this section
and Section~\ref{S:three}. 
We use it to deduce Theorem~\ref{Th:one}, which we restate in a more precise form.\medskip

\noindent{\bf Theorem~\ref{Th:one}.}
 {\it Let $\adot$ be a Schubert problem on $\G(1,\P^n)$.
      Then $\Gal(\adot)$ is at least alternating.} \medskip

\begin{proof}
 We use a double induction on the dimension $n$ of the ambient projective space and the number
 $m$ of conditions.
 The initial cases are when one of $n$ or $m$ is less than four, for by Lemma~\ref{L:small_m},
 $K(a_1,\dotsc,a_m)\leq 2$ and the trivial subgroups of these small symmetric
 groups are alternating.
 Only in case $\adot=(1,1,1,1)$ with $n=3$ is $K(\adot)=2$.

 Given a non-reduced Schubert problem, the associated reduced Schubert
 problem is in a smaller-dimensional projective space, and so its Galois group is at least
 alternating, by hypothesis.
 We may therefore assume that $\adot$ is a reduced Schubert problem, so that for 
 $1\leq i<j\leq m$, we have $a_i+a_j\leq n{-}1$, where 
 $n:=n(\adot)$.
 Let $\pr\colon W\to X$ be as in Subsection~\ref{SS:Galois}, so that fibers of $\pr$ are
 intersections of Schubert problems~\eqref{Eq:fiber}.
 Recall that $X$ is smooth.
 Define $Z\subset X$ by
 \[
    \defcolor{Z}\ :=\ 
    \{(L_1,\dotsc,L_m)\in X\mid 
      L_{m-1},L_m\mbox{ do not span }\P^n\}\,.
 \]
 This subvariety is proper, for if $L_{m-1},L_m$ are general and $a_{m-1}+a_m\leq n{-}1$,
 they span $\P^n$.

 Let \defcolor{$Y$} be the pullback of the map $\pr\colon W\to X$ along the inclusion
 $Z\hookrightarrow X$.
 By Remark~\ref{R:Schubert_recursion}, $Y$ has two components $Y_1$ and $Y_2$ corresponding to the 
 two components of~\eqref{Eq:Y1Y2}.
 The first component $Y_1$ is the total space of the Schubert problem
 $(a_1,\dotsc,a_{m-2},a_{m-1}{+}a_m)$, and so by induction $\Gal_{Y_1\to Z}$ is at least
 alternating. 
 For the second component $Y_2\to Z$,
 first replace $Z$ by its dense open subset in which $L_{m-1},L_m$ span a hyperplane
 $\overline{L_{m-1},L_m}$.
 Observe that under the map from $Z$ to the space of hyperplanes in $\P^n$ given by
 \[
   (L_1,L_2,\dotsc,L_m)\ \longmapsto\ 
   \overline{L_{m-1},L_m}\,,
 \]
 the fiber of $Y_2\to Z$ over a
 fixed hyperplane $\Lambda$ is the total space of the Schubert problem
 $(a_1,\dotsc,a_{m-2},a_{m-1}{-}1,a_m{-}1)$ in $\G(1,\Lambda)$.  
 Again, our inductive hypothesis and Case (a) of Vakil's criterion (as elucidated in
 Remark~\ref{R:Vakil}) implies that $\Gal_{Y_2\to Z}$ is at least alternating.

 We conclude by an application of Vakil's criterion that $\Gal_{W\to X}$ is at least alternating,
 which proves Theorem~\ref{Th:one}.
\end{proof}

%
\subsection{Some Schubert problems with symmetric Galois group}\label{S:symmetric}
While Theorem~\ref{Th:one} asserts that all Schubert problems involving lines have at least
alternating Galois group, we conjectured that Galois groups of Schubert problems of lines
are always the full symmetric group. 
We present some evidence for this conjecture.

The first non-trivial computation of a Galois group of a Schubert problem that we know of was
for the problem $\adot=(1^6)$ in $\G(1,\P^4)$ where $K(\adot)=5$.
Byrnes and Stevens showed that $\Gal(\adot)$ is the full symmetric
group~\cite{BS_homotopy} and~\cite[\S 5.3]{By89}.
In~\cite{LS09} problems $\adot=(1^{2n-2})$ for $n=5,\dotsc,9$ were shown to have Galois group
the full symmetric group.
Both demonstrations used numerical methods.

We describe two infinite families of Schubert problems, each of which has the full symmetric
group as Galois group.
Both are generalizations of the problem of four lines.
In~\cite[\S 8]{So97}, the Schubert problem $\adot=(1^n,n{-}2)$ in $\G(1,\P^n)$ was studied
to find solutions in finite fields. 
It involves lines meeting a fixed line $\ell$ and $n$ codimension-two planes in $\P^n$.
Fixing the line $\ell$ and all but one codimension-two plane, the lines meeting them form a
rational normal scroll $S_{1,n{-}2}$, parametrized by the intersections of these lines with $\ell$.
A general codimension-two plane will meet the scroll in $n{-}1$ points, each of which gives a
solution to the Schubert problem.
These points correspond to $n{-}1$ points of $\ell$, and thus to a homogeneous degree $n{-}1$
form on $\ell$.
The main consequence of~\cite[\S 8]{So97} is that every such form can arise, which shows
this Schubert problem has Galois group the full symmetric group.

The other infinite family is $((a{-}1)^4)$, which is described in~\cite[\S 8]{So97}.
We use a slightly different description of it in the Grassmannian of two-dimensional linear
subspaces of $2a$-dimensional space, $V$ (which is identical to $\G(1,\P^{2a-1})$).
It involves the $2$-planes meeting four general $a$-planes in $V$.
If the $a$-planes are $H_1,\dotsc,H_4$, then any two are in direct sum.
It follows that $H_3$ and $H_4$ are the graphs of linear isomorphisms
$\varphi_3,\varphi_4\colon H_1\to H_2$.
If we set $\psi:=\varphi_4^{-1}\circ\varphi_3$, then $\psi\in GL(H_1)$.
The condition that these four planes are generic is that $\psi$ has distinct eigenvalues
and therefore exactly $a$ eigenvectors $v_1,\dotsc,v_a\in H_1$, up to scalar multiples.
Then the solutions to the Schubert problem are
\[
   \overline{v_i,\varphi_3(v_i)}\qquad\mbox{for }i=1,\dotsc,a\,.
\]
Every element $\psi\in GL(H_1)$ with distinct eigenvalues may occur, which implies that the 
Galois group is the full symmetric group.

We remark that one may also apply Vakil's Remark 3.8~\cite{Va06b} to these problems to deduce that
their Galois group is the full symmetric group.

%
\subsection{Inequality of Lemma~\ref{L:induction} in most cases}

We give a combinatorial injection on sets of Young tableaux to establish
Lemma~\ref{L:induction} when we have $a_i\neq a_j$ for some $i,j$.

\begin{lemma}\label{L:unequal_inequality}
 Suppose that $\adot=(b_1,\ldots,b_\mu,\alpha,\beta,\gamma)$ is a reduced Schubert problem
 where $\alpha\leq\beta\leq\gamma$ with $\alpha<\gamma$.
 Then
 \begin{equation}\label{Eq:unequal_inequality}
   K(b_1,\ldots,b_\mu,\,\alpha, \beta+\gamma)\ <\ 
   K(b_1,\ldots,b_\mu,\,\gamma, \beta+\alpha)\,.
 \end{equation}
\end{lemma}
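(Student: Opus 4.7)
The plan is to exhibit an explicit injection
\[
 \iota\colon\calK(b_\bullet,\alpha,\beta{+}\gamma)\lhra\calK(b_\bullet,\gamma,\beta{+}\alpha)
\]
and then construct a tableau in the codomain that is not in its image. Writing $M:=\mu+1$, I first note a structural rigidity: in any tableau $T\in\calK(b_\bullet,\alpha,\beta{+}\gamma)$, column-strictness forces all $\beta{+}\gamma$ copies of the maximal letter $M{+}1$ to occupy the last $\beta{+}\gamma$ cells of row~$2$. Hence there is some $t\in\{0,\dots,\alpha\}$ so that row~$2$ reads $R_2\cdot M^t\cdot (M{+}1)^{\beta+\gamma}$ and row~$1$ reads $R_1\cdot M^{\alpha-t}$, where $[R_1\mid R_2]$ is a semistandard tableau of shape $(N{-}\alpha{+}t,\,N{-}\beta{-}\gamma{-}t)$ with content $(b_1,\dots,b_\mu)$. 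An analogous description governs the codomain, parametrized by the number $s$ of row-$2$ occurrences of $M$.

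I then define $\iota(T)$ by changing the first $\gamma{-}\alpha$ copies of $M{+}1$ in row~$2$ of $T$ into $M$'s, leaving row~$1$ unchanged. Row~$2$ stays weakly increasing; column-strictness is preserved in the altered columns because the row-$1$ entries there still lie in $R_1$, hence are at most $\mu<M$; and the content becomes $(b_\bullet,\gamma,\beta{+}\alpha)$. The map is manifestly injective---its inverse restores the last $\gamma{-}\alpha$ row-$2$ copies of $M$ back to $M{+}1$'s---and its image consists of exactly those tableaux in $\calK(b_\bullet,\gamma,\beta{+}\alpha)$ having at least $\gamma{-}\alpha$ copies of $M$ in row~$2$.

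To obtain strict inequality I produce a tableau $T'$ in $\calK(b_\bullet,\gamma,\beta{+}\alpha)$ outside this image. Set $s:=\max\{0,\lceil(\gamma{-}\alpha{-}\beta)/2\rceil\}$. A short check using $\beta\geq 1$ and $\gamma>\alpha$ shows that $s<\gamma-\alpha$ and that $(P,Q):=(N{-}\gamma{+}s,\,N{-}\beta{-}\alpha{-}s)$ is a valid two-row shape with $P\geq Q\geq 0$. By the dominance characterization of nonzero Kostka numbers for two-row shapes, a semistandard tableau $R$ of shape $(P,Q)$ with content $(b_1,\dots,b_\mu)$ exists iff $P\geq\max_i b_i$. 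This is precisely where the reduced hypothesis bites: from $b_i+\gamma<n=N+1$ one obtains $b_i\leq N-\gamma\leq P$. Appending $\gamma{-}s$ copies of $M$ to row~$1$ of $R$, and $s$ copies of $M$ followed by $\beta{+}\alpha$ copies of $M{+}1$ to row~$2$, produces the desired $T'$. The main obstacle is exactly this last existence statement; dropping the reduced assumption allows $P<\max_i b_i$, in which case $\iota$ can become a bijection and the inequality degenerates to equality.
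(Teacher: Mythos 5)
Your injection $\iota$ is precisely the one the paper uses, and your characterization of its image (at least $\gamma-\alpha$ copies of $M$ in row~$2$, equivalently at most $\alpha$ in row~$1$) matches the paper's. The flaw is in the non-surjectivity witness: with $s:=\max\{0,\lceil(\gamma-\alpha-\beta)/2\rceil\}$, the tableau $T'$ you build fails to be semistandard whenever $\gamma-\alpha-\beta\geq 2$. Once the $M$'s and $(M{+}1)$'s are placed, column-strictness forces the $M$-block of row~$2$ (columns $Q{+}1,\dots,Q{+}s$) to end before the $M$-block of row~$1$ (columns $P{+}1,\dots,N$) begins, i.e.\ $Q+s\leq P$, which unwinds to $s\geq\gamma-\alpha-\beta$. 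Your choice only guarantees $2s\geq\gamma-\alpha-\beta$ (that is, $P\geq Q$), and $\lceil(\gamma-\alpha-\beta)/2\rceil<\gamma-\alpha-\beta$ as soon as the difference is at least $2$, so columns $P{+}1,\dots,Q{+}s$ then carry $M$ in both rows. A concrete failure: $(b_1,b_2,b_3,\alpha,\beta,\gamma)=(5,5,5,1,1,5)$ gives $N=11$, $s=2$, $(P,Q)=(8,7)$, and column~$9$ of $T'$ has $M=4$ in both rows.

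The repair is immediate and leaves your other checks intact: take $s:=\max\{0,\gamma-\alpha-\beta\}$. Then $Q+s\leq P$ holds by construction, $s<\gamma-\alpha$ still follows from $\beta\geq 1$, $Q=\min\{N-\alpha-\beta,\,N-\gamma\}\geq 0$ by validity and reducedness, and $P=N-\gamma+s\geq N-\gamma\geq b_i$ by reducedness exactly as you argued, so $R$ exists by the two-row dominance criterion. For comparison, the paper avoids choosing $s$ at all: it takes any tableau in $\calK(b_1,\ldots,b_\mu,\gamma{-}\alpha{-}1,\beta{-}1)$, a valid Schubert problem by reducedness (hence nonempty by Lemma~\ref{L:small_m}), and appends $\alpha{+}1$ columns of $M$ over $M{+}1$; the result has more than $\alpha$ copies of $M$ in row~$1$, hence lies outside the image.
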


To see that this implies Lemma~\ref{L:induction} in the case when $a_i\neq a_j$, for
some $i,j$, we apply Schubert's recursion to obtain two different expressions for $K(\adot)$, 
%
 \begin{multline*}
   \qquad   K(b_1,\ldots,b_\mu,\,\alpha, \beta{+}\gamma)\ + \ 
   K(b_1,\ldots,b_\mu,\,\alpha, \beta{-}1,\gamma{-}1)\\
  \ =\    K(b_1,\ldots,b_\mu,\,\gamma, \beta+\alpha)\ +\ 
   K(b_1,\ldots,b_\mu,\,\gamma, \beta{-}1,\alpha{-}1)\,.\qquad
 \end{multline*}
%
By the inequality~(\ref{Eq:unequal_inequality}), at least one of these expressions involves
unequal terms.
Since all four terms are from valid Schubert problems, none are zero, and so this implies
Lemma~\ref{L:induction} when not all $a_i$ are identical.\qed

\begin{proof}[Proof of Lemma~$\ref{L:unequal_inequality}$]
 We establish the inequality~(\ref{Eq:unequal_inequality}) via a combinatorial injection
 \begin{equation}\label{Eq:injection}
   \iota\ \colon\  \calK(b_1,\ldots,b_\mu,\,\alpha, \beta+\gamma)\ 
   \lhook\joinrel\relbar\joinrel\rightarrow\ 
   \calK(b_1,\ldots,b_\mu,\,\gamma, \beta+\alpha)\,,
 \end{equation}
 which is not surjective.

 Let $T$ be a tableau in $\calK(b_1,\ldots,b_\mu,\,\alpha, \beta+\gamma)$ and let $A$ be its
 sub-tableau consisting of the entries $1,\ldots,\mu$.
 Then the skew tableau $T\setminus A$ has a bloc of $(\mu{+}1)$s of length $a$ at the end of
 its first row and its second row consists of a bloc of $(\mu{+}1)$s of length $\alpha{-}a$
 followed by a bloc of $(\mu{+}2)$s of length $\beta{+}\gamma$. 
 Form the tableau $\iota(T)$ by changing the last row of $T\setminus A$ to 
 a bloc of $(\mu{+}1)$s of length $\gamma{-}a$ followed by a bloc of $(\mu{+}2)$s of length
 $\beta{+}\alpha$. 
 Since $a\leq\alpha<\gamma$, this map is well-defined, and gives the
 inclusion~\eqref{Eq:injection}. 
 We illustrate this schematically.
\[
  T\ =\ 
  \raisebox{-13pt}{\begin{picture}(131,33)
   \put(0,0){\includegraphics{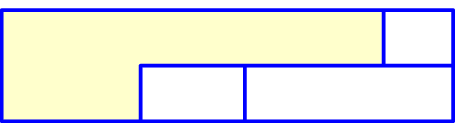}}
   \put(117,22){$a$}
   \put(45,6){$\alpha{-}a$}\put(90,6){$\beta{+}\gamma$}
   \put(17,15){$A$}
  \end{picture}}
   \ \longmapsto\ 
  \raisebox{-13pt}{\begin{picture}(131,33)
   \put(0,0){\includegraphics{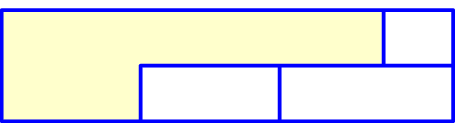}}
   \put(117,22){$a$}
   \put(50,6){$\gamma{-}a$}\put(95,6){$\beta{+}\alpha$}
   \put(17,15){$A$}
  \end{picture}}
  \ =:\ \iota(T)
\]
 
 To show that $\iota$ is not surjective, set
 $\defcolor{b_\bullet}:=(b_1,\ldots,b_\mu,\gamma-\alpha-1,\beta-1)$, which is a valid
 Schubert problem.
 Hence $K(b_\bullet)\neq 0$ and $\calK(b_\bullet)\neq\varnothing$.
 For any $T\in\calK(b_\bullet)$, we may add $\alpha{+}1$ columns to its end consisting of a
 $\mu{+}1$ above a $\mu{+}2$ to obtain a tableau
 $T'\in\calK(b_1,\ldots,b_\mu,\,\gamma,\beta+\alpha)$.
 As $T'$ has more than $\alpha$ $(\mu{+}1)$s in its first row, it is not in the image of the
 injection $\iota$,
 which completes the proof of the lemma.
\end{proof}

%
\section{Some formulas for Kostka numbers}\label{S:Kostka}
 We prove Lemmas~\ref{L:small_m} and~\ref{L:m=5} using Schubert's recursion and
 give an integral formula for Kostka numbers 
 coming from the Weyl integral formula.

%
\subsection{Proof of Lemma~\ref{L:small_m}}\label{S:counting}

We show that if $\adot$ is a valid Schubert problem, then
$K(\adot)\neq 0$, and we also compute $K(\adot)$ for $m\leq 4$.

Observe that there are no valid Schubert problems with $m=1$
(as we require that each component $a_i$ is positive).

%
\subsubsection{}
When $m=2$, valid Schubert problems have the form $(a,a)$ with $n(\adot)=a{+}1$.
The corresponding geometric problem asks for the lines meeting two general linear spaces of
dimension $n{-}a{-}1=0$, that is, the lines meeting two general points.
Thus $K(a,a)=1$.

%
\subsubsection{}
Let $(a,b,c)$ be a valid Schubert problem.
We may assume that $b{+}c>a$ so that $K(a,b,c)=K(a,b{-}1,c{-}1)$ by~\eqref{Eq:reduction}.
Iterating this will lead to a Schubert problem with $m=2$, and so we see that $K(a,b,c)=1$.

%
\subsubsection{}
Suppose that $(a_1,a_2,a_3,a_4)$ is a valid Schubert problem,
and suppose that $a_1\leq a_2\leq a_3\leq a_4$.
If it is reduced, then we have 
\[
  a_3\ +\ a_4\ \leq\ \frac{1}{2}(a_1+a_2+a_3+a_4)\ \leq\ a_3\ +\ a_4\,,
\]
implying that the four numbers are equal, say to $a$.
Write $\adot=(a^4)$ in this case.
By~\eqref{E:combinatorial_recursion}, 
\[
  K(a^4)\ =\ K(a,a,2a)\ +\ K(a,a,a{-}1,a{-}1)\ =\ 
   1\ +\ K((a{-}1)^4)\,,
\]
as $K(a,a,2a)=1$ and $K(a,a,a{-}1,a{-}1)=K((a{-}1)^4)$, by~\eqref{Eq:reduction}.
Since $K(1^4)=2$, as this is the problem of four lines, we obtain
$K(a^4)=1{+}a$,
which proves~\eqref{Eq:m=4} by induction on $a$ when $\adot$ is reduced and therefore equal to
$(a^4)$. 

Now suppose that $\adot$ is not reduced, and set 
 \begin{eqnarray*}
   \defcolor{\alpha(\adot)}&:=&\min\{a_i\mid i=1,\dotsc,4\}\qquad\mbox{and}\\
   \defcolor{\beta(\adot)} &:=&\min\{n(\adot){-}1{-}a_i\mid i=1,\dotsc,4\}\,.
 \end{eqnarray*}
Since $\adot$ is not reduced and $a_1\leq a_2\leq a_3\leq a_4$, we have 
$a_1+a_2< a_1+\dotsb+a_4< a_3+a_4$ and~\eqref{Eq:reduction} gives 
\[
   K(\adot)\ =\ K(a_1,a_2,a_3{-}1,a_4{-}1)\,.
\]
Set $\defcolor{\adot'}:=(a_1,a_2,a_3{-}1,a_4{-}1)$.
We prove~\eqref{Eq:m=4} by showing that 
 \begin{equation}\label{Eq:mins}
   \min\{\alpha(\adot),\beta(\adot)\}\ =\ 
   \min\{\alpha(\adot'),\beta(\adot')\}\,.
 \end{equation}

Note that $n(\adot')=n(\adot){-}1$. 
Since $a_1\leq a_3$, we have $\alpha(\adot')=\alpha(\adot)=a_1$ unless 
$a_1=a_3$, in which case $\adot=(a,a,a,a{+}2\gamma)$ for some $\gamma\geq 1$.
Thus $\adot'=(a{-}1,a,a,a{+}2\gamma{-}1)$, and so $\alpha(\adot')=\alpha(\adot){-}1$.
But then $\beta(\adot')=\beta(\adot)=a{-}\gamma\leq \alpha(\adot')$, which proves~\eqref{Eq:mins}
when $\alpha(\adot')\neq\alpha(\adot)$.

Since $a_2\leq a_4$, we have $\beta(\adot')=\beta(\adot)=n(\adot){-}1{-}a_4$, unless
$a_2=a_4$, in which case $\adot=(a,a{+}2\gamma,a{+}2\gamma,a{+}2\gamma)$ for some 
$\gamma\geq 1$.
Thus $\adot'=(a,a{+}2\gamma{-}1,a{+}2\gamma{-}1,a{+}2\gamma)$, and so 
$\beta(\adot')=\beta(\adot){-}1=a{+}\gamma{-}1$.
But then $\alpha(\adot')=\alpha(\adot)=a\leq\beta(\adot')<\beta(\adot)$, which
proves~\eqref{Eq:mins} 
when $\beta(\adot')\neq\beta(\adot)$, and completes the proof of Lemma~\ref{L:small_m}.

%
\subsection{Proof of Lemma~\ref{L:m=5}}

Let $a=2b$ be positive and even.
By Schubert's recursion~\eqref{E:combinatorial_recursion},
 \[
    K(a^3,(a{-}1)^2) \ =\ K(a^3,2a{-}2)\ +\ K(a^3,(a{-}2)^2)\,.
 \]
If we apply Schubert's recursion to the last term and then repeat, we obtain
 \[
     K(a^3,(a{-}1)^2) \ =\ \sum_{j=1}^a K(a^3, 2a{-}2j)\,.
 \]
 Since $a=2b$ and $n(a^3, 2a{-}2j)=5b-j+1$, Lemma~\ref{L:small_m} implies that 
 \[
     K(a^3, 2a{-}2j)\ =\ 1\ +\ \min\{ 2b, \, 2(2b{-}j),\, 3b{-}j, \, b{+}j\}\,.
 \]
 If $1\leq j\leq b$, then this minimum is $b{+}j$, and if $b< j\leq a=2b$, then this
 minimum is $4b-2j$.
 Writing $j=b+i$ when $b<j$, we have
 \begin{eqnarray*}
   K(a^3,(a{-}1)^2)&=& \sum_{j=1}^b 1{+}b{+}j\ +\ \sum_{i=1}^b 1{+}2b{-}2i\\
   &=& b+b^2+\tfrac{b(b+1)}{2}\ +\ b+2b^2-(b(b+1) \ =\ \frac{5b^2+3b}{2}\,,
 \end{eqnarray*}
which completes the proof of Lemma~\ref{L:m=5}.\hfill\qed

%
\subsection{An integral formula for Kostka numbers}

 Let \defcolor{$V_a$} be the irreducible representation of $SU(2)$ with highest weight $a$.
 Then $K(a_1,\dotsc,a_m)$ is the multiplicity of the trivial representation $V_0$ in the
 tensor product $V_{a_1}\otimes\dotsb\otimes V_{a_m}$.
 If $\chi_a$ is the character of $V_a$, then 
\[
   K(a_1,\dotsc,a_m)\ =\ \langle\chi_0, \prod_i^m\chi_{a_i}\rangle\ =\ 
   \int_{SU(2)} \prod_{i=1}^m \chi_{a_i}(g) dg\,,
\]
 the integral with respect to Haar measure on $SU(2)$, as $\chi_0(g)=1$.

 The Weyl integral formula rewrites this as an integral over the torus $T=U(1)$ of $SU(2)$.
 First note that for $e^{\sqrt{-1}\theta}\in U(1)$,
\[
\chi_{a}(e^{\sqrt{-1}\theta})\ =\ \frac{e^{(a+1)\sqrt{-1}\theta}-e^{-(a+1)\sqrt{-1}\theta}}
{e^{\sqrt{-1}\theta}-e^{\sqrt{-1}\theta}}\ =\ \frac{\sin{(a{+}1)\theta}}{\sin{\theta}} \,  .
\]
 Then the Weyl integral formula gives
 \begin{eqnarray}
  K(a_1,\dotsc,a_m)& =& \nonumber
  2 \int_{0}^{2\pi} 
       \Bigl(\prod_{i=1}^m \frac{\sin{(a_i{+}1)\theta}}{\sin{\theta}}\Bigr)\,
        \sin^2\theta\,\frac{d\theta}{2\pi}\\
   &=& \label{Eq:Kostka_integral}
    \frac{2}{\pi} \int_{0}^{\pi} 
       \Bigl(\prod_{i=1}^m \frac{\sin{(a_i{+}1)\theta}}{\sin{\theta}}\Bigr)\,
        \sin^2\theta\, d\theta\,,
 \end{eqnarray}
as the integrand $f(\theta)$ satisfies $f(\theta)=f(2\pi-\theta)$.

%
\section{Proof of Lemma~\ref{L:induction} when $\adot=(a^m)$}\label{S:three}

We prove Lemma~\ref{L:induction} in the remaining case when $a_1=\dotsb=a_m=a$. 
We use~\eqref{Eq:Kostka_integral} to recast the the inequality of Lemma~\ref{L:induction} into the
non-vanishing of an integral, which we establish by induction. 
It will be convenient to write $\defcolor{\lambda_a(\theta)}$ for the quotient
$\frac{\sin(a+1)\theta}{\sin\theta}$.

%
\subsection{Inequality of Lemma~\ref{L:induction} when $\adot=(a^m)$}

We complete the proof of Theorem~\ref{Th:one} by establishing the inequality of
Lemma~\ref{L:induction} for Schubert problems not covered by Lemma~\ref{L:unequal_inequality}.
For these, every condition is the same, so $\adot=(a,\dotsc,a)=(a^m)$.

If $a=1$, then we may use the hook-length formula~\cite[\S4.3]{Fu97}.
If $\mu+b=2c$ is even, then the Kostka number $K(1^\mu,b)$ is the number of Young tableaux
of shape $(c,c{-}b)$, which is
\[
   K(1^\mu,b)\ =\ \frac{\mu!(b{+}1)}{(c{-}b)!(c{+}1)!}\;.
\]
When $m=2c$ is even, the inequality of Lemma~\ref{L:induction} is
that $K(1^{2c-2})\neq K(1^{2c-2},2)$.
We compute
\[
  K(1^{2c-2})\ =\ \frac{(2c-2)!(1)}{c!(c+1)!}
  \qquad\mbox{and}\qquad
  K(1^{2c-2},2)\ =\ \frac{(2c-2)!(3)}{(c-2)!(c+1)!}
\]
and so 
 \begin{equation}\label{E:a=1}
   K(1^{2c-2},2)/K(1^{2c-2})\ =\ 3\frac{c!(c{+}1)!}{(c{-}2)!(c{+}1)!}
    \ =\ 3\frac{c{-}1}{c{+}1}\ \neq\ 1\,,
 \end{equation}
when $c>2$, but when $c=2$ both Kostka numbers are $1$, which proves the  inequality of
Lemma~\ref{L:induction}, when each $a_i=1$.\smallskip

We now suppose that $\adot=(a^{\mu+2})$ where $a>1$ and $a\mu$ is even.
(We write $m=\mu+2$ to reduce notational clutter.)
The case $a=2$ is different because in the inequality~\eqref{Eq:ineq},
 \[
  K(2^\mu, 4)\ -\ K(2^\mu,1,1)\ \neq\ 0\,,
 \]
the left-hand side is negative for $\mu\leq 13$ and otherwise positive.
This is shown in Table \ref{T:inequality_a=2}.
\begin{table}[htdp]
\caption{The inequality \eqref{Eq:ineq} for the case $\adot=(2^{\mu+2})$.}
\label{T:inequality_a=2}
\begin{tabular}{|c||c|c|c|}
\hline
$\mu$ & $K(2^\mu,4)$ & $K(2^\mu,1,1)$ & Difference \\\hline\hline
2 & 1 & 2 & $-1$ \\\hline
3 & 2 & 4 & $-2$ \\\hline
4 & 6 & 9 & $-3$ \\\hline
5 & 15 & 21 & $-6$ \\\hline
6 & 40 & 51 & $-11$ \\\hline
\vdots & \vdots & \vdots &\vdots \\\hline
%
%
13 & 41262 & 41835 & $-573$ \\\hline
14 & 113841 & 113634 & 207\\\hline
15 & 315420 & 310572 & 4848\\\hline
\end{tabular} 
\end{table}

\begin{lemma}\label{L:inequality_a=2}
  For all $\mu\geq 2$, we have $K(2^\mu,4)\neq K(2^\mu,1,1)$, and both terms are nonzero.
  If $\mu<14$ then $K(2^\mu,4)<K(2^\mu,1,1)$ and if $\mu\geq 14$, then $K(2^\mu,4) >K(2^\mu,1,1)$.
\end{lemma}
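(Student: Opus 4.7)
The plan is to combine explicit computation for small $\mu$ with an integral estimate for large $\mu$. First, for $\mu=2,3,\dotsc,15$ I would verify the lemma directly via Table~\ref{T:inequality_a=2}, whose entries follow from iterating Schubert's recursion~\eqref{E:combinatorial_recursion} starting from $K(2,2)=1$; this establishes the sign change between $\mu=13$ and $\mu=14$ and the non-vanishing of both terms.

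For $\mu\ge 16$, I would reduce the comparison to a single integral. The Clebsch--Gordan identities $V_1^{\otimes 2}\simeq V_0\oplus V_2$ and $V_2^{\otimes 2}\simeq V_0\oplus V_2\oplus V_4$ for $SU(2)$-representations yield
\[
K(2^\mu,1,1)=K(2^\mu)+K(2^{\mu+1}),\qquad K(2^\mu,4)=K(2^{\mu+2})-K(2^{\mu+1})-K(2^\mu),
\]
so by the Weyl integral formula~\eqref{Eq:Kostka_integral} with $u=2\cos\theta$,
\[
K(2^\mu,4)-K(2^\mu,1,1)\;=\;\frac{1}{2\pi}\int_{-2}^{2}(u^2-1)^\mu\,(u^4-4u^2+1)\,\sqrt{4-u^2}\,du\;=:\;I(\mu).
\]
It suffices to show $I(\mu)>0$ for all $\mu$ past an explicit threshold $\mu_0$.

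The polynomial $u^4-4u^2+1$ has roots $u=\sqrt{2\pm\sqrt{3}}$: it is positive on $[0,\sqrt{2-\sqrt{3}}]\cup[\sqrt{2+\sqrt{3}},2]$ and negative between. By symmetry I restrict to $u\in[0,2]$. On the right outer piece $[\sqrt{2+\sqrt{3}},2]$, $u>1$ so $(u^2-1)^\mu\ge 0$ regardless of parity, reaching $3^\mu$ at $u=2$; on the central piece $|u^2-1|\le 1+\sqrt{3}$. The three-step estimate would be: (a) Laplace-lower-bound the outer-right integral by substituting $u=2-t$, using $(u^2-1)^\mu \ge 3^\mu e^{-4\mu t/3}$ locally and $\sqrt{4-u^2}\sim 2\sqrt{t}$, producing a lower bound of order $3^\mu/\mu^{3/2}$ with an explicit constant; (b) upper-bound the absolute value of the central contribution by $(1+\sqrt{3})^\mu$ times an explicit constant multiple of $\int |u^4-4u^2+1|\sqrt{4-u^2}\,du$ over the central region; (c) absorb the tiny contribution from $[0,\sqrt{2-\sqrt{3}}]$ (where $|u^2-1|\le 1$) trivially. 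Since $3/(1+\sqrt{3})=(3\sqrt{3}-3)/2>1$, the ratio (a)/(b) grows exponentially in $\mu$, so (a) dominates (b)$+$(c) past an explicit $\mu_0$, giving $I(\mu)>0$.

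The main obstacle is that the exponential advantage $3/(1+\sqrt{3})\approx 1.098$ is barely above $1$, so crude Laplace bounds will not push $\mu_0$ as far down as $16$. The delicate work lies either in tracking constants carefully in (a)--(b), or in extending Table~\ref{T:inequality_a=2} past $\mu=15$ to some larger $\mu_0$ where the asymptotic estimate clearly takes over. A further bookkeeping issue is the parity of $\mu$: for odd $\mu$ the sign of $(u^2-1)^\mu$ flips at $u=1$, altering the sign structure on the central region; one checks that the extra sign change only produces additional positive contributions on $(\sqrt{2-\sqrt{3}},1)$ rather than new negative ones, so the parity-by-parity analysis runs in parallel.
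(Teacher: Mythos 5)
Your proposal is correct in outline and genuinely different from the paper's argument, so a comparison is worth making. You and the paper share the starting point (the Weyl integral formula~\eqref{Eq:Kostka_integral}), but from there you substitute $u=2\cos\theta$ to land on the clean expression $I(\mu)=\frac{1}{2\pi}\int_{-2}^2 (u^2-1)^\mu(u^4-4u^2+1)\sqrt{4-u^2}\,du$ (your Clebsch--Gordan bookkeeping is right: $(u^2-1)^2-2(u^2-1)-2=u^4-4u^2+1$), and then you propose a Laplace-type estimate near $u=2$ competing against an exponential upper bound $(1+\sqrt3)^\mu$ on the negative central piece. The paper instead keeps the $\theta$-variable, splits the integral at $\pi/3$ and $\pi/12$ (the zero of $F(\theta)=\sin 5\theta\sin\theta-\sin^2 2\theta$), gives a crude $\pi/4$ bound on $[\pi/3,\pi/2]$, and then runs an \emph{induction on $\mu$}: on $[0,\pi/12]$ one has $\lambda_2\ge 1+\sqrt3$ and $F\ge 0$, on $[\pi/12,\pi/3]$ one has $0\le\lambda_2\le 1+\sqrt3$ and $F\le 0$, so multiplying by $\lambda_2$ increases the positive part by a factor $\ge 1+\sqrt3$ while increasing the (already bounded) negative part and the error term $\pi/4$ by a factor $\le 1+\sqrt3$. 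The induction closes immediately from the base case $\mu=14$ (computed exactly as $\tfrac{1062882}{17017}\sqrt3+69\pi$). That inductive trick is exactly what lets the paper reach $\mu=14$ with no careful constant-tracking.

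The one substantive weakness in your plan is the one you yourself flag: because $3/(1+\sqrt3)\approx1.098$, any honest Laplace lower bound on $[\sqrt{2+\sqrt3},2]$ (which carries an extra $\mu^{-3/2}$ from the vanishing of $\sqrt{4-u^2}$ at $u=2$) will not beat a crude $(1+\sqrt3)^\mu$ bound on the central region until $\mu$ is far past $14$, so you would have to extend Table~\ref{T:inequality_a=2} by hand up to whatever threshold your constants produce. That is mechanically fine but much heavier than the paper's two-line induction. Also, your central-region bound can and should be sharpened: near $u_0=\sqrt{2+\sqrt3}$ the factor $|u^4-4u^2+1|$ vanishes linearly, so the negative contribution is really of order $(1+\sqrt3)^\mu/\mu^2$ rather than $(1+\sqrt3)^\mu$, which helps the comparison considerably and would bring your threshold down---but you still will not match $\mu=14$ without either very careful explicit constants or an induction in the spirit of the paper. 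Finally, your parity remark is a bit loose: for $\mu$ odd, $(u^2-1)^\mu<0$ on $(0,\sqrt{2-\sqrt3})$ where $u^4-4u^2+1>0$, so there \emph{is} a new negative contribution there; it is harmless because $|u^2-1|\le1$ on that interval, as you note in (c), but the sentence as written understates what needs checking.
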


The remaining cases $a\geq 3$ have a uniform behavior.

\begin{lemma}\label{L:equal_inequality}
 For $a\geq3$ and for all $\mu\geq 2$ with $a \mu$ even we have
 \begin{equation}\label{E:equal_inequality}
   K(a^\mu,\,2a)\ <\ K(a^\mu,(a{-}1)^2)\,. 
 \end{equation}
\end{lemma}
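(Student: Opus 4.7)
My plan is to reduce the Kostka-number inequality to an analytic inequality via \eqref{Eq:Kostka_integral} and dispatch it by base cases combined with an integral estimate. By Schubert's recursion \eqref{E:combinatorial_recursion}, we have $K(a^{\mu+2}) = K(a^\mu,2a) + K(a^\mu,(a-1)^2)$, so the target inequality $K(a^\mu,(a-1)^2) > K(a^\mu,2a)$ is equivalent to
\[
D_\mu := K(a^{\mu+2}) - 2K(a^\mu,2a) > 0.
\]
Writing this via \eqref{Eq:Kostka_integral} and the algebraic identity $\chi_a^2 - 2\chi_{2a} = \chi_{a-1}^2 - \chi_{2a}$ (equivalent to $V_a \otimes V_a = V_{a-1} \otimes V_{a-1} \oplus V_{2a}$), together with the trigonometric identity $\sin^2(a\theta) - \sin^2((a+1)\theta) = -\sin\theta \sin((2a+1)\theta)$, yields the pleasant form
\[
D_\mu = \frac{2}{\pi} \int_0^\pi \lambda_a(\theta)^\mu \, h_a(\theta)\, d\theta, \qquad h_a(\theta) := (\lambda_{a-1}^2 - \lambda_{2a})\sin^2\theta = 2\sin^2(a\theta) - \sin^2((a+1)\theta).
\]

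Next I will handle the base cases $\mu = 2$ and, when $a$ is even, $\mu = 3$, directly from the closed-form expressions. Iterating Schubert's recursion on $(a,a,j,j)$ gives $K(a,a,a-1,a-1) = a$ and $K(a,a,2a) = 1$, so $D_2 = a - 1 \geq 2$ for $a \geq 3$, and Lemma \ref{L:m=5} gives $D_3 = (5b^2 + b - 2)/2 > 0$ for $a = 2b \geq 4$.

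For $\mu \geq 4$ the plan is to split $[0,\pi]$ into a boundary region $[0,\delta] \cup [\pi-\delta,\pi]$, where $\lambda_a^\mu$ concentrates, and its complement. Using the symmetry $h_a(\pi-\theta) = h_a(\theta)$ and the Taylor expansion $h_a(\theta) = (a^2 - 2a - 1)\theta^2 + O(\theta^4)$ near $\theta = 0$, together with the bound $a^2 - 2a - 1 \geq 2$ for $a \geq 3$, one obtains $h_a(\theta) \geq c_a \theta^2$ on a fixed neighborhood of each endpoint. A standard Gaussian-style estimate then bounds the boundary contribution from below by a positive multiple of $(a+1)^\mu \mu^{-3/2}$, while on the middle region $|\lambda_a(\theta)| \leq C_\delta < a+1$, so the middle contribution has magnitude at most $\pi \cdot \|h_a\|_\infty \cdot C_\delta^\mu$. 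Choosing $\delta = \delta(a)$ appropriately forces the boundary contribution to dominate, giving $D_\mu > 0$.

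The main obstacle will be making the estimates uniform in $\mu \geq 4$ with explicit constants that already work for the smallest $\mu$ not covered by the base cases, since the middle contribution decays only exponentially faster than the boundary one. A useful intermediate reformulation is the Parseval-style identity
\[
D_\mu = N_0^{(\mu)} - 2 N_{2a}^{(\mu)} + N_{2a+2}^{(\mu)},
\]
where $N_k^{(\mu)}$ is the multiplicity of weight $k$ in $V_a^{\otimes \mu}$, equivalently the coefficient of $x^{(\mu a - k)/2}$ in the polynomial $(1+x+\dotsb+x^a)^\mu$. This recasts the inequality as a strict-convexity statement about an explicit polynomial, offering a discrete alternative to the integral estimate that can tighten the bound in the transitional regime of $\mu$.
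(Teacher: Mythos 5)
Your reduction to the integral formula is the same starting point as the paper's, and your base cases $\mu=2,3$ match. You also correctly identify the integrand: your $h_a = 2\sin^2 a\theta - \sin^2(a{+}1)\theta$ is exactly $\tfrac{1}{2}F_a$ where $F_a := 1 - 2\cos 2a\theta + \cos 2(a{+}1)\theta$ is what the paper integrates against, and your Taylor coefficient $a^2-2a-1$ is right. Your Parseval-style identity $D_\mu = N_0 - 2N_{2a} + N_{2a+2}$ is correct (both $K(a^\mu,(a{-}1)^2)=N_0-N_{2a}$ and $K(a^\mu,2a)=N_{2a}-N_{2a+2}$ follow by telescoping Clebsch--Gordan), and it is a nice discrete reformulation the paper does not use.

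However, the core of your plan for $\mu\geq 4$ — a Laplace/Gaussian concentration estimate with a free cutoff $\delta(a)$ — is exactly where a real gap sits, and you flag it yourself without resolving it. The difficulty is quantitative: the main lobe of $\lambda_a$ ends at $\theta=\tfrac{\pi}{a+1}$ with peak $\lambda_a(0)=a{+}1$, while on $[\tfrac{\pi}{a+1},\tfrac{\pi}{2}]$ one only has $|\lambda_a|\leq\tfrac{(a+1)^3}{3(a+1)^2-4}\approx\tfrac{a+1}{3}$. The peak-to-sidelobe ratio is therefore bounded (roughly $3$), not growing with $a$, so for the smallest case $\mu=4$ the factor $(C_\delta/(a{+}1))^\mu$ is only about $3^{-4}$; a generic "$\mu^{-3/2}$ from a Gaussian tail beats $C_\delta^\mu$ for $\delta(a)$ chosen appropriately" argument does not by itself produce constants that close the inequality uniformly in $a\geq 3$ at $\mu=4$. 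The paper avoids this by (i) splitting precisely at $\tfrac{\pi}{a+1}$, where both $\lambda_a$ and $F_a$ change sign; (ii) proving the $\mu=4$ base case of the split inequality in closed form, reducing it to a single-variable polynomial inequality in $a$ checked by expanding about $a=3$; and (iii) establishing an inductive step $\int_0^{\pi/(a+1)}\lambda_a^{\mu+1}F_a \geq C_a\int_0^{\pi/(a+1)}\lambda_a^\mu F_a$ with $C_a$ exactly the sidelobe bound, which is proved via a sharp linear lower bound $\ell_a\leq\lambda_a$ on the main lobe together with the Mercer--Caccia inequality. In short, to make your sketch rigorous you would essentially need to reproduce the paper's explicit closed-form evaluation and polynomial comparison at $\mu=4$, plus the inductive mechanism — the Laplace heuristic alone does not suffice in the transitional regime of small $\mu$ that you correctly identify as the bottleneck.
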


We establish Lemma~\ref{L:inequality_a=2} in Subsection~\ref{SS:a=2}
and Lemma~\ref{L:equal_inequality} in Subsection~\ref{SS:a=3}.

\begin{proof}[Proof of Lemma~$\ref{L:induction}$ when $\adot=(a^m)$]
 We established the case when $a=1$ by direct computation in~\eqref{E:a=1}.
 Lemma~\ref{L:inequality_a=2} covers the case when $a=2$ as $\mu=m{-}2$,
 and Lemma~\ref{L:equal_inequality} covers the remaining cases.
 This completes the proof of Lemma~\ref{L:induction} and of Theorem~\ref{Th:one}.
\end{proof}

%
\subsection{Proof of Lemma~\ref{L:inequality_a=2}}\label{SS:a=2}

By the computations recorded in Table~\ref{T:inequality_a=2}, we only need to show that 
$K(2^\mu,4)-K(2^\mu,1,1)>0$ for $\mu\geq 14$.
Using~\eqref{Eq:Kostka_integral}, we have
 \begin{eqnarray*}
 \ K(2^\mu,4)-K(2^\mu,1,1)&=& \frac{2}{\pi}\int_0^\pi \lambda_2(\theta)^\mu
          \bigl( \lambda_4(\theta)\ -\ \lambda_1(\theta)^2\bigr)\; \sin^2{\theta})\;
          d\theta\\
  &=&\frac{2}{\pi}\int_0^\pi  \lambda_2(\theta)^\mu 
          \bigl( \sin{5\theta}\ \sin{\theta}\ -\ \sin^2{2\theta}\bigr)\; d\theta\ .\qquad
 \end{eqnarray*}
The integrand $f(\theta)$ of the last integral is symmetric about $\theta=\frac{\pi}{2}$ 
in that $f(\theta)=f(\pi-\theta)$.
Thus it  suffices to prove that if $\mu\geq 14$, then
 \begin{equation}\label{Eq:integral_a=2}
  \int_0^{\pi/2} \lambda_2(\theta)^\mu (\sin{5\theta}\ \sin{\theta}\ -\ \sin^2{2\theta})
   \, d\theta
   \ >\ 0\,.
\end{equation}
To simplify our notation, set
 \[
   \defcolor{F(\theta)}\ :=\  \sin{5\theta}\ \sin{\theta}\ -\ \sin^2{2\theta}\,.
 \]
We graph these functions and the integrand in~\eqref{Eq:integral_a=2} for $\mu=8$
in Figure~\ref{F:graphs_a=2}.
\begin{figure}[htb]

   \begin{picture}(137,163)(-7,0)
    \put(0,0){\includegraphics{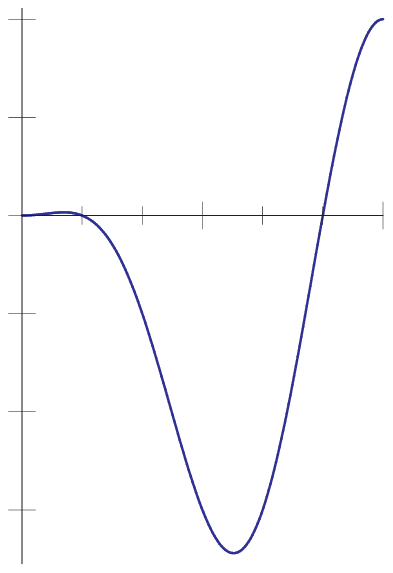}}
    \put(114,85){$\frac{\pi}{2}$}
    \put( 62,85){$\frac{\pi}{4}$}

    \put(-7, 14){$-\frac{3}{2}$}
    \put(-7, 42){$-1$}
    \put(-7, 70){$-\frac{1}{2}$}
    \put( 3,127){$\frac{1}{2}$}
    \put( 3,155){$1$}
    \put(28,10){$F$}
  \end{picture}
 \quad
   \begin{picture}(135,163)(-5,0)
    \put(0,0){\includegraphics{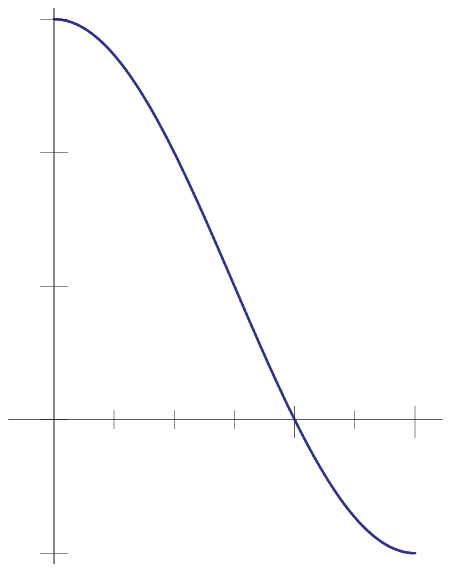}}
    \put(114,27.5){$\frac{\pi}{2}$}
    \put( 79,27.5){$\frac{\pi}{3}$}

    \put(-5,  1){$-1$}
    \put( 0, 77){$1$}
    \put( 0,116){$2$}
    \put( 0,154.5){$3$}
    \put(32,10){$\lambda_2$}
  \end{picture}
  \quad
  \begin{picture}(150,163)(-20,0)
    \put(0,0){\includegraphics{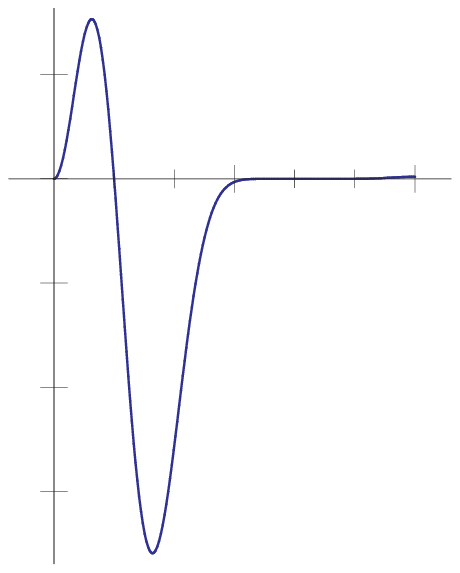}}
    \put(114,99){$\frac{\pi}{2}$}
    \put( 62,99){$\frac{\pi}{4}$}

    \put(-19, 19){$-150$}
    \put(-19, 49){$-100$}
    \put(-14, 79){$-50$}
    \put(-4,139){$50$}
    \put(65,10){$\lambda_2^8 F$}
  \end{picture}

\caption{The functions $F$, $\lambda_2$, and $\lambda_2^8F$.}
\label{F:graphs_a=2}
\end{figure}

We have
 \[
  \int_{0}^{\frac{\pi}{2}} \lambda_2^\mu F \ \geq \ 
  \int_{0}^{\frac{\pi}{3}} \lambda_2^\mu F\ - \ 
    \int_{\frac{\pi}{3}}^{\frac{\pi}{2}} \big| \lambda_2^\mu F\, \big| \,.
 \]
We prove Lemma~\ref{L:inequality_a=2} by showing that for $\mu\geq 14$, we have
\begin{equation}\label{Eq:main_inequality_a=2}
  \int_{0}^{\frac{\pi}{3}} \lambda_2^\mu F\ >\  
  \int_{\frac{\pi}{3}}^{\frac{\pi}{2}} \big| \lambda_2^\mu F\, \big| \,.
\end{equation}
We estimate the right-hand side.
On $[\frac{\pi}{3},\frac{\pi}{2}]$, the function $\lambda_2$ is 
decreasing and negative, so $| \lambda_2|\leq|\lambda_2(\frac{\pi}{2})| = 1$.
Similarly, the function $F$ increases from $-\frac{3}{2}$ at $\frac{\pi}{3}$ to $1$ at
$\frac{\pi}{2}$.
Thus 
\[ 
  \int_{\frac{\pi}{3}}^{\frac{\pi}{2}} \big| \lambda_2^\mu F \big| \ \leq \
  \int_{\frac{\pi}{3}}^{\frac{\pi}{2}} \frac{3}{2} \ = \ \frac{\pi}{4}\,.
\]
It is therefore enough to show that
\begin{equation}\label{Eq:RHS_main_inequality}
 \int_{0}^{\frac{\pi}{3}} \lambda_2^\mu F \ > \  
  \frac{\pi}{4}\,,
\end{equation}
for $\mu\geq 14$.
This inequality holds for $\mu=14$, as 
\[
  \int_{0}^{\frac{\pi}{3}} \lambda_2^{14} F \ =\ 
    \frac{1062882}{17017} \sqrt{3} +69 \pi\,.
\]

Suppose now that the inequality~\eqref{Eq:RHS_main_inequality} holds for some $\mu\geq 14$.
As $F$ is positive on $[0,\frac{\pi}{12}]$ and negative on
$[\frac{\pi}{12},\frac{\pi}{3}]$, this is equivalent to
\[
 \int_{0}^{\frac{\pi}{12}} \lambda_2^\mu F \ > \  
 -\int_{\frac{\pi}{12}}^{\frac{\pi}{3}} \lambda_2^\mu F\ + \ \frac{\pi}{4}\,, 
\]
and both integrals are positive.

For $\theta\in[0,\frac{\pi}{12}]$, $F(\theta)\geq 0$ and 
$\lambda_2(\theta)\geq \lambda_2(\frac{\pi}{12})=1+\sqrt{3}$ 
as $\lambda_2$ is decreasing on $[0,\frac{\pi}{2}]$.
Thus
 \begin{equation}\label{Eq:first_simplification}
   \int_{0}^{\frac{\pi}{12}}  \lambda_2^{\mu+1} F  \ \geq \ 
   \int_{0}^{\frac{\pi}{12}} \left(1{+}\sqrt{3}\right) \cdot \lambda_2^\mu F \,.
 \end{equation}
Similarly, for $\theta \in[\frac{\pi}{12},\frac{\pi}{3}]$, $F(\theta)\leq 0$ and
$1{+}\sqrt{3}\geq \lambda_2(\theta)\geq 0$, so 
\begin{equation}\label{Eq:second_simplification}
 -\int_{\frac{\pi}{12}}^{\frac{\pi}{3}} \left(1{+}\sqrt{3}\right) \cdot \lambda_2^\mu F
 \ \geq\ 
  -\int_{\frac{\pi}{12}}^{\frac{\pi}{3}} \lambda_2^\mu F\,.
\end{equation}

From the induction hypothesis and equations~\eqref{Eq:first_simplification}
and~\eqref{Eq:second_simplification}, we have 
 \begin{eqnarray*}
   \int_{0}^{\frac{\pi}{12}}  \lambda_2^{\mu+1} F &\geq& 
   \Bigl(1{+}\sqrt{3}\Bigr)  \cdot  \int_{0}^{\frac{\pi}{12}}\lambda_2^\mu F \\
  &>&  (1{+}\sqrt{3})\biggl( 
    -\int_{\frac{\pi}{12}}^{\frac{\pi}{3}} \lambda_2^\mu F\ +\ \frac{\pi}{4}\biggr) \\
  &>&-\int_{\frac{\pi}{12}}^{\frac{\pi}{3}}  \lambda_2^{\mu+1} F\ +\  \frac{\pi}{4}\,.    
 \end{eqnarray*}
This completes the proof of Lemma~\ref{L:inequality_a=2}. \hfill\qed

%
%
\subsection{Proof of Lemma~\ref{L:equal_inequality}}\label{SS:a=3}

We must show that $K(a^\mu,(a{-}1)^2) - K(a^\mu,2a)>0$ when $a\mu$ is even, $a\geq 3$, and 
$\mu\geq 2$.
We show the cases when $\mu=2,3$ by direct computation and then establish this inequality for
$\mu\geq 4$ by induction.

When $\mu=2$, we have $K(a^2,2a)=1$ and $K(a^2,(a{-}1)^2)=1+(a{-}1)=a$, by
Lemma~\ref{L:small_m}.
Thus $K(a^2,(a{-}1)^2)-K(a^2,2a)=a{-}1>0$ when $a\geq 3$.

When $\mu=3$, we must have that $a$ is even.  
Set $b:=a/2$.
The $K(a^3,2a)=1+b$ and $K(a^3(a{-}1)^2)=(5b^2+3b)/2$.
Then $K(a^3(a{-}1)^2)-K(a^3,2a)=\frac{1}{2}(5b^2+b-2)$, which is positive for
$b\geq 1$, and hence for $a\geq 2$.

By the integral formula for Kostka numbers~\eqref{Eq:Kostka_integral},
$K(a^\mu,(a{-}1)^2) - K(a^\mu,2a)$ is equal to 
 \begin{equation}\label{eq:integral_apositive}
   \frac{2}{\pi}\int_0^\pi \lambda_a(\theta)^\mu
    \bigl(\sin^2\, a\theta - \sin\,(2a{+}1)\theta\; \sin\,\theta\bigr)\, d\theta\ >\ 0\,.
 \end{equation}
Recall that $\lambda_a(\theta)=\frac{\sin(a{+}1)\theta}{\sin\theta}$ 
and write
\[
   \defcolor{F_a(\theta)}\ :=\  2(\sin^2\, a\theta - \sin\,(2a{+}1)\theta\; \sin\,\theta)
     \ =\ 1-2\cos 2a\theta+\cos\, (2a+2)\theta\,.
\]
These functions have symmetry about $\theta=\frac{\pi}{2}$,
\[
   F_a(\theta)\ =\ F_a(\pi-\theta)
    \qquad\mbox{\qquad}
   \lambda_a(\theta)\ =\ (-1)^a\lambda_a(\pi-\theta)\,.
\]
Thus if $a\mu$ is odd, the integral~\eqref{eq:integral_apositive} vanishes,
and it suffices to prove that
 \begin{equation}\label{Eq:whole_inequality}
   \int_0^\frac{\pi}{2} \lambda_a^\mu F_a\ >\ 0\,,
   \qquad \mbox{for all } a\geq 3\ \mbox{and}\ \mu\geq 4\,.
 \end{equation}
As in Subsection~\ref{SS:a=2}, we show this inequality by breaking the integral into two
pieces.
This is based on the following lemma, whose proof is given below.

\begin{lemma}\label{L:F_a}
  For $\theta\in[0,\frac{\pi}{a+1}]$, we have $\lambda_a(\theta)\geq 0$ and 
  $F_a(\theta)\geq 0$.
\end{lemma}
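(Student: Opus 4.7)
My plan is to handle the two claims separately. The inequality $\lambda_a(\theta)\geq 0$ is immediate: on $[0,\pi/(a+1)]$ we have $(a+1)\theta\in[0,\pi]$, so both $\sin(a+1)\theta$ and $\sin\theta$ are nonnegative, and thus so is their ratio $\lambda_a(\theta)$ (taking the removable limit $\lambda_a(0)=a+1$). All of the real work goes into establishing $F_a(\theta)\geq 0$.

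For that, the first step is to recast $F_a$ in a more useful form. Starting from $F_a(\theta)=1-2\cos 2a\theta+\cos(2a+2)\theta$ and using the double-angle identity $2\sin^2 x=1-\cos 2x$, one finds
\[
  F_a(\theta)\;=\;4\sin^2 a\theta\;-\;2\sin^2(a+1)\theta,
\]
which factors as a difference of squares,
\[
  F_a(\theta)\;=\;2\bigl(\sqrt{2}\,\sin a\theta - \sin(a+1)\theta\bigr)\bigl(\sqrt{2}\,\sin a\theta + \sin(a+1)\theta\bigr).
\]
On $[0,\pi/(a+1)]$ both $\sin a\theta$ and $\sin(a+1)\theta$ are nonnegative (since $a\theta,(a+1)\theta\in[0,\pi]$), so the right-hand factor is nonnegative and everything reduces to proving $\sqrt{2}\sin a\theta\geq\sin(a+1)\theta$.

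For this last step I will invoke the concavity of $\sin$ on $[0,\pi]$. Writing $a\theta$ as the convex combination $\tfrac{a}{a+1}\cdot(a+1)\theta+\tfrac{1}{a+1}\cdot 0$ and applying concavity gives
\[
  \sin a\theta\;\geq\;\tfrac{a}{a+1}\sin(a+1)\theta,
\]
so it is enough to check that $\sqrt{2}\,a/(a+1)\geq 1$, i.e.\ $a\geq 1/(\sqrt{2}-1)=\sqrt{2}+1\approx 2.414$. This holds for every integer $a\geq 3$, which is the regime (Subsection~\ref{SS:a=3}) in which the lemma will be applied.

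The only nontrivial step is spotting the identity $F_a=4\sin^2 a\theta-2\sin^2(a+1)\theta$; once that is in hand, the difference-of-squares factorization together with concavity of sine finishes the argument in a single line, avoiding any case analysis based on the sign of $\cos a\theta$.
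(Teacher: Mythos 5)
Your proof is correct, and it takes a genuinely different (and cleaner) route than the paper's. You begin with the same observations — $\lambda_a\geq 0$ on $[0,\tfrac{\pi}{a+1}]$ is immediate, and one should rewrite $F_a$ in terms of sines — but where the paper uses $F_a=2(\sin^2 a\theta - \sin(2a{+}1)\theta\sin\theta)$ and then argues in two stages (a sign analysis on $[\tfrac{\pi}{2a+1},\tfrac{2\pi}{2a+1}]$ and a second-derivative estimate on $[0,\tfrac{\pi}{2a+1}]$, using $F_a(0)=F_a'(0)=0$ and $F_a''\geq 0$), you instead observe the identity $F_a=4\sin^2 a\theta-2\sin^2(a{+}1)\theta$, factor as a difference of squares, and reduce everything to the single inequality $\sqrt{2}\sin a\theta\geq\sin(a{+}1)\theta$, which follows in one line from concavity of sine on $[0,\pi]$. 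Your argument avoids the case split and the repeated differentiation entirely, and the threshold you obtain, $a>1+\sqrt{2}$, is exactly the same constraint the paper uses (it appears there as $8a^2>4(a{+}1)^2$). One should note, as you do, that the restriction $a\geq 3$ is not stated in the lemma itself but is supplied by the enclosing context (Subsection~\ref{SS:a=3}), and indeed the conclusion fails for $a=1$; this is a feature of the paper's statement rather than a defect of your argument. Your approach is arguably more robust: the paper's intermediate bound $F_a''>4(a{+}1)^2(\cos 2a\theta-\cos 2(a{+}1)\theta)$ requires $\cos 2a\theta>0$, which does not hold on all of $[0,\tfrac{\pi}{2a+1}]$, whereas your concavity step is uniform across the whole interval.
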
 

 Thus we have,
 \[
   \int_0^\frac{\pi}{2} \lambda_a^\mu F_a\ >\ 
    \int_0^\frac{\pi}{a+1} \lambda_a^\mu F_a\ -\ 
    \int_{\frac{\pi}{a+1}}^\frac{\pi}{2} |\lambda_a^\mu F_a|\,,
 \]
and Lemma~$\ref{L:equal_inequality}$ follows from the following estimate.

\begin{lemma}\label{L:final_estimate}
 For every $a\geq 3$ and $\mu\geq 4$, we have 
 \begin{equation}\label{eq:integral_positive}
    \int_0^\frac{\pi}{a+1} \lambda_a^\mu F_a\ >\ 
    \int_{\frac{\pi}{a+1}}^\frac{\pi}{2} |\lambda_a^\mu F_a|\,.
 \end{equation}
\end{lemma}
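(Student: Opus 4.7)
\emph{Proof plan.} The plan is to prove the estimate by induction on $\mu \geq 4$, in the spirit of the induction used in Lemma~\ref{L:inequality_a=2}. I set $\theta_0 := \pi/(2(a{+}1))$, the midpoint of the ``good'' interval, and define
\[
  A_\mu \;:=\; \int_0^{\theta_0} \lambda_a^\mu F_a\, d\theta\,, \qquad
  J_\mu \;:=\; \int_{\pi/(a+1)}^{\pi/2} |\lambda_a^\mu F_a|\, d\theta\,.
\]
By Lemma~\ref{L:F_a}, both $\lambda_a$ and $F_a$ are nonnegative on $[0,\pi/(a{+}1)]$, so the contribution from $[\theta_0,\pi/(a{+}1)]$ to the left-hand side of \eqref{eq:integral_positive} is nonnegative; hence it suffices to prove $A_\mu > J_\mu$.

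The two key monotonicity estimates are that $\lambda_a$ is decreasing on $[0,\theta_0]$ with minimum $L := \lambda_a(\theta_0) = 1/\sin(\pi/(2(a{+}1)))$, while $|\lambda_a(\theta)| \leq L' := 1/\sin(\pi/(a{+}1))$ on $[\pi/(a{+}1),\pi/2]$ (using $|\sin(a{+}1)\theta| \leq 1$ and monotonicity of $\sin$). The double-angle identity $\sin(\pi/(a{+}1)) = 2\sin(\pi/(2(a{+}1)))\cos(\pi/(2(a{+}1)))$ gives $L/L' = 2\cos(\pi/(2(a{+}1))) \geq 2\cos(\pi/8) > 1.84$ for every $a \geq 3$. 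The induction step is then immediate: assuming $A_\mu > J_\mu$, multiplying pointwise by $\lambda_a$ on each region gives $A_{\mu+1} \geq L\cdot A_\mu$ and $J_{\mu+1} \leq L'\cdot J_\mu$, so $A_{\mu+1}/J_{\mu+1} \geq (L/L')\cdot(A_\mu/J_\mu) > A_\mu/J_\mu > 1$, and the ratio strictly grows by a uniform factor exceeding $1.84$.

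The remaining task is the base case $A_4 > J_4$, which must be verified uniformly in $a \geq 3$. I would lower-bound $A_4 \geq L^4 \int_0^{\theta_0} F_a\, d\theta$; the antiderivative of $F_a = 1 - 2\cos 2a\theta + \cos(2a{+}2)\theta$ is elementary, and at $\theta_0$ the $\cos(2a{+}2)\theta$ term contributes zero, giving $\int_0^{\theta_0} F_a = \pi/(2(a{+}1)) - \sin(\pi/(a{+}1))/a \geq \pi(a{-}2)/(2a(a{+}1))$. For the upper bound on $J_4$ the crude pointwise estimate $|F_a| \leq 4$ is too weak; instead one exploits cancellation by splitting $F_a$ into its three Fourier pieces and integrating each against $\lambda_a^4$ separately, using integration by parts to gain factors of $1/a$, or by summing the contributions from each ``lobe'' of $\lambda_a$ between consecutive zeros $k\pi/(a{+}1)$ for $k = 1, \dots, \lfloor (a{+}1)/2 \rfloor$. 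Obtaining an upper bound on $J_4$ sharp enough to close the inequality uniformly in $a \geq 3$ is the main technical obstacle; once secured, the induction above immediately propagates the inequality to all $\mu \geq 4$.
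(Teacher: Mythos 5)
Your inductive step is correct and is a genuine simplification of the paper's. The paper's Lemma~\ref{L:inductive_step} establishes
$\int_0^{\pi/(a+1)} \lambda_a^{\mu+1} F_a \geq C_a \int_0^{\pi/(a+1)} \lambda_a^\mu F_a$
by comparing $\lambda_a$ with the linear function $\ell_a$, splitting the domain at the zero of $\ell_a - C_a$, and verifying positivity of a two-variable polynomial. Your observation---that one may shrink the left-hand integral to $[0,\theta_0]$ with $\theta_0 = \pi/(2(a{+}1))$, where $\lambda_a \geq L := 1/\sin\theta_0$ because $\lambda_a' < 0$ on $(0,\pi/(a{+}1))$ (cf.\ the proof of Lemma~\ref{L:ell_a})---avoids all of that. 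With $L' := 1/\sin\tfrac{\pi}{a+1} \geq \sup_{[\pi/(a+1),\,\pi/2]}|\lambda_a|$, the double-angle identity $L/L' = 2\cos\theta_0 \geq 2\cos\tfrac{\pi}{8} > 1$ shows the ratio $A_\mu/J_\mu$ increases by a uniform factor, so the induction closes in a few lines. The price is a strictly stronger base case: you need $A_4 := \int_0^{\theta_0}\lambda_a^4 F_a > J_4$ rather than the paper's $\int_0^{\pi/(a+1)}\lambda_a^4 F_a > J_4$. On balance the trade looks favorable.

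That base case, however, is left open---by your own description it is ``the main technical obstacle''---so what you have is a plan, not a proof. And the gap is genuine: the crude bounds you sketch do not close it at the worst value $a=3$. Taking $L\geq \tfrac{2(a+1)}{\pi}$ (from $\sin x\leq x$) together with the paper's bound $J_4 \leq \tfrac{4(a+1)}{3\sqrt 2} + \tfrac{(a+1)^3}{12\sqrt 2}$ gives $A_4 \gtrsim 5.5$ against $J_4 \lesssim 7.5$ at $a=3$, which is inconclusive. Keeping $L = 1/\sin\tfrac{\pi}{8}$ and $J_4 \leq \tfrac{4}{3}\cot\tfrac{\pi}{a+1}\bigl(2 + \csc^2\tfrac{\pi}{a+1}\bigr)$ exact does give $A_4 > 7.3 > 5.4 > J_4$ at $a=3$, with the margin growing like $a^3$; so the inequality is almost certainly true and your route can work, but a uniform-in-$a$ verification---something like the polynomial sign check the paper carries out in Subsection~\ref{SS:base}---must still be written down. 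The Fourier-decomposition and integration-by-parts machinery you contemplate is probably overkill; tightening the elementary trigonometric estimates (and the accompanying algebra) should suffice, but until that is done the argument is incomplete.
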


We prove this inequality~\eqref{eq:integral_positive} by induction, 
first establishing the inductive step in Subsection~\ref{SS:ind} and 
then computing the base case in Subsection~\ref{SS:base}.

\begin{proof}[Proof of Lemma~$\ref{L:F_a}$]
 The statement for $\lambda_a$ is immediate from its definition.
 For $F_a$, we use elementary calculus.
 Recall that $F_a(\theta)=1-2\cos\, 2a\theta +\cos\, 2(a{+}1)\theta$, which equals
\[
    2(\sin^2\, a\theta - \sin\,(2a{+}1)\theta\; \sin\,\theta)\,.
\]
 Since the first term is everywhere nonnegative and the second nonnegative on
 $[\frac{\pi}{2a+1},\frac{2\pi}{2a+1}]$ (and $\frac{\pi}{a+1}<\frac{2\pi}{2a+1}$), we only
 need to show that $F_a$ is nonnegative on $[0,\frac{\pi}{2a+1}]$.
 Since $F_a(0)=0$, it will suffice to show that $F'_a$ is nonnegative on
 $[0,\frac{\pi}{2a+1}]$.

 As $F'_a=4a\sin\,2a\theta-2(a{+}1)\sin\,2(a{+}1)\theta$, we have $F_a'(0)=0$, and so it
 will suffice to show that $F''_a$ is nonnegative on  $[0,\frac{\pi}{2a+1}]$.
 Since $a>2$, we have $8a^2>4(a+1)^2$, and so
 \begin{eqnarray*}
   F_a''&=& 8a^2 \cos\, 2a\theta\ -\ 4(a{+}1)^2 \cos\, 2(a{+}1)\theta\\
        &&>\  4(a{+}1)^2(\cos\, 2a\theta\ -\ \cos\, 2(a{+}1)\theta)
        \ \:=\:\ 
        8(a{+}1)^2 \sin\,(2a{+}1)\theta\; \sin\,\theta\,.
 \end{eqnarray*}
 But this last expression is nonnegative on  $[0,\frac{\pi}{2a+1}]$.
\end{proof} 
 
Our proof of Lemma~\ref{L:final_estimate} will use the following well-known inequalities
for the sine function. 

\begin{proposition}\label{P:sine_ineqs}
 If $0\leq x\leq \frac{\pi}{2}$, then $\frac{2}{\pi}x\leq \sin\, x$.
 If $0\leq x\leq \frac{\pi}{4}$, then $\frac{2\sqrt{2}}{\pi}x\leq \sin\, x$.
 If $0\leq x\leq\pi$, then $\sin\,x\leq \frac{4}{\pi^2} x(\pi-x)$.
 Lastly, for every $x\geq 0$, we have
\begin{equation}\label{eq:MercerCaccia}
   3\frac{x}{\pi} - 4\frac{x^3}{\pi^3}\ \leq\ \sin\, x\ \leq\ x\,.
\end{equation}
\end{proposition}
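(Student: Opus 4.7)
The proposition gathers four standard inequalities for $\sin$, each provable by a short calculus argument. My plan is to treat them in turn: the two Jordan-type linear lower bounds and the easy bound $\sin x \leq x$ follow from one-line arguments, while the parabolic upper bound and the Mercer--Caccia lower bound require a careful but elementary analysis of the sign of a derivative.

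For the Jordan bounds, I would use that $\sin$ is concave on $[0,\pi]$ (since $\sin'' = -\sin \leq 0$ there) and so lies above each of its chords. The chord of $\sin$ through $(0,0)$ and $(\pi/2, 1)$ is $y = 2x/\pi$, giving $\sin x \geq 2x/\pi$ on $[0,\pi/2]$; the chord through $(0,0)$ and $(\pi/4, \sqrt{2}/2)$ is $y = (2\sqrt{2}/\pi)x$, giving the second inequality on $[0,\pi/4]$. The bound $\sin x \leq x$ on $[0,\infty)$ is immediate from $\frac{d}{dx}(x-\sin x) = 1 - \cos x \geq 0$ together with equality at $0$.

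For the parabolic upper bound $\sin x \leq \frac{4}{\pi^2}x(\pi - x)$ on $[0,\pi]$, I would set $g(x) := \frac{4}{\pi^2}x(\pi - x) - \sin x$ and use the symmetry $g(\pi - x) = g(x)$ to reduce to $[0,\pi/2]$. There $g(0) = g(\pi/2) = 0$ and $g'(\pi/2) = 0$, while $g''(x) = \sin x - 8/\pi^2$ is negative near $0$ and positive near $\pi/2$; hence $g'$ decreases from $g'(0) = 4/\pi - 1 > 0$ and then increases back to $0$. Since $g$ is not identically zero (e.g., $g(\pi/4) > 0$), $g'$ must dip negative in between, so $g$ rises to a positive interior maximum and descends back to $0$ at $\pi/2$, giving $g \geq 0$.

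For $\sin x \geq h(x) := 3x/\pi - 4x^3/\pi^3$ on $[0,\infty)$, I would first note that $h'(x) = 3/\pi - 12x^2/\pi^3$ vanishes at $\pi/2$, so $h$ peaks at $h(\pi/2) = 1$ and decreases monotonically afterward to $h(\pi) = -1$ and below; this handles $x \geq \pi$ since $h(x) \leq -1 \leq \sin x$. On $[\pi/2, \pi]$, with $f := \sin - h$, one has $f(\pi/2) = f'(\pi/2) = 0$ and $f''(x) = -\sin x + 24x/\pi^3 \geq -1 + 12/\pi^2 > 0$, so $f$ is convex with a double root at $\pi/2$ and hence $f \geq 0$. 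On $[0,\pi/2]$, the endpoint data $f(0) = f(\pi/2) = 0$ together with $f'(0) = 1 - 3/\pi > 0$ and a derivative-sign analysis parallel to the parabolic case shows that $f$ rises to a positive interior maximum and returns to $0$. The main subtlety in both non-linear bounds is ruling out oscillation; the saving feature is always that $\pi/2$ is a double zero of the relevant difference, which combined with the one-sided data at $0$ pins down the shape uniquely.
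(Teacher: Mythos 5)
Your proposal is correct, but it takes a more self-contained route than the paper. For the two Jordan-type lower bounds the paper, like you, just invokes concavity of $\sin$ on $[0,\frac{\pi}{2}]$. For the quadratic upper bound and the cubic (Mercer--Caccia) lower bound, however, the paper provides no proof at all: it cites \cite{QG} and \cite{Mercer-Caccia} respectively, and dismisses $\sin x\le x$ as standard. You instead supply full calculus proofs, and your arguments are sound: in each case the double zero of the difference at $\frac{\pi}{2}$, combined with monotonicity of a higher derivative, limits the number of sign changes, and Rolle's theorem then forces the desired sign on the whole interval. One spot worth tightening is the parabolic case: rather than appealing to $g(\frac{\pi}{4})>0$, note that $g'''=\cos x\ge 0$ on $[0,\frac{\pi}{2}]$, so $g''$ is increasing and crosses zero exactly once, making $g'$ decrease then increase; since $g''(\frac{\pi}{2})>0$ and $g'(\frac{\pi}{2})=0$, $g'$ is strictly increasing to $0$ near $\frac{\pi}{2}$ and hence negative just to its left, which already gives the dip without any auxiliary evaluation. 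Similarly, in the Mercer--Caccia case on $[0,\frac{\pi}{2}]$ the same counting ($f$ having a third zero would force two interior zeros of $f''$, contradicting that $f''$ is decreasing-then-increasing) closes the ``derivative-sign analysis'' you gesture at. Your approach buys a self-contained, elementary proof; the paper's buys brevity by outsourcing the two non-trivial bounds to the literature.
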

The first two inequalities hold as the sine function is concave on the interval $[0,\frac{\pi}{2}]$,
and the last is standard.
The quadratic upper bound is derived in~\cite{QG}\footnote{For a (later) English version,
  see
  Xiaohui Zhang, Gendi Wang, and Yuming Chu, 
   {\it Extensions and Sharpenings of Jordan's and Kober's Inequalities}, JPIAM, {\bf 7}
   (2006), Issue 2, Article 63.}.
The cubic lower bound for sine is the \demph{Mercer--Caccia inequality}~\cite{Mercer-Caccia}.
We illustrate these bounds.
\[
  \begin{picture}(270,110)
    \put(0,0){\includegraphics{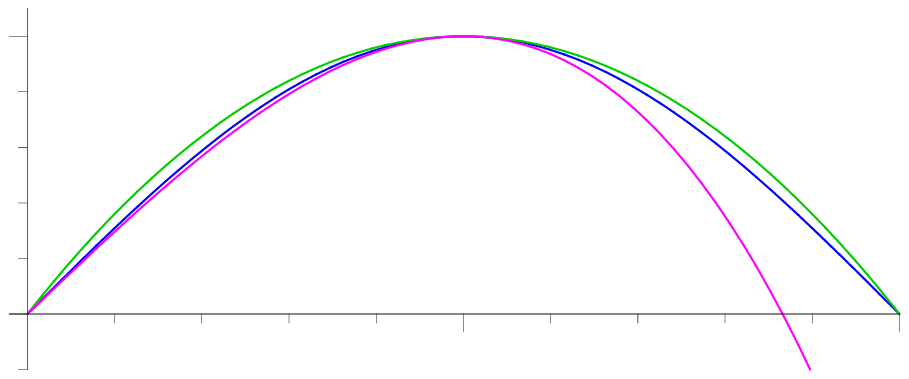}}
    \put(230,85){\Blue{$\sin\, x$}}
    \put(242.5,82.5){\Blue{\vector(-1,-1){20}}}
    \put(165,40){\Magenta{$3\frac{x}{\pi} - 4\frac{x^3}{\pi^3}$}}
    \put( 20,90){\Gre{$\frac{4}{\pi^2} x(\pi-x)$}}
    \put( 45,84){\Gre{\vector(1,-2){10}}}
    \put(  0,97){\small$1$}
    \put(  0,17){\small$0$}
    \put(136, 3){\small$\frac{\pi}{2}$}
    \put(263, 5){\small$\pi$}
  \end{picture}
\]

%
\subsubsection{Induction step of Lemma~$\ref{L:final_estimate}$}\label{SS:ind}
Our main tool is the following estimate.

\begin{lemma}\label{L:inductive_step}
  For all $a,\, \mu\geq 3$, we have
 \begin{equation}\label{E:inequality_for_inductivestep}
   \int_{0}^{\frac{\pi}{a+1}} \lambda_a^{\mu+1} F_a
   \ \:\geq\: \  \frac{(a{+}1)^3}{3(a{+}1)^2-4}\int_{0}^{\frac{\pi}{a+1}}\lambda_a^\mu F_a\,.
 \end{equation}
\end{lemma}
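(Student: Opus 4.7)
I plan to derive~\eqref{E:inequality_for_inductivestep} from a pointwise Mercer--Caccia lower bound on $\lambda_a$.  By Lemma~\ref{L:F_a}, both $\lambda_a$ and $F_a$ are nonnegative throughout $[0,\pi/(a{+}1)]$, so the inequality is equivalent to the statement that the weighted average of $\lambda_a$ against the positive measure $\lambda_a^\mu F_a\,d\theta$ on this interval is at least $C(a):=(a{+}1)^3/(3(a{+}1)^2-4)$.  The choice of constant is explained by the identity
\[
  \frac{1}{C(a)}\ =\ \frac{3}{a+1}\ -\ \frac{4}{(a+1)^3}\,,
\]
which is exactly the Mercer--Caccia lower bound from~\eqref{eq:MercerCaccia} evaluated at $x=\pi/(a{+}1)$.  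Equivalently, $C(a)\geq 1/\sin(\pi/(a{+}1))$, and this latter quantity is essentially the maximum of $|\lambda_a|$ on the complementary interval $[\pi/(a{+}1),\pi/2]$, which is precisely the growth rate that the outer induction in Lemma~\ref{L:final_estimate} must outpace.

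Next I would apply Mercer--Caccia with $x=(a{+}1)\theta\in[0,\pi]$ together with $\sin\theta\leq\theta$ to obtain the pointwise estimate
\[
  \lambda_a(\theta)\ \geq\ \frac{3(a{+}1)}{\pi}\ -\ \frac{4(a{+}1)^3\theta^2}{\pi^3}
  \qquad\mbox{for }\theta\in[0,\pi/(a{+}1)]\,.
\]
Multiplying by $\lambda_a^\mu F_a\geq 0$ and integrating, the desired inequality~\eqref{E:inequality_for_inductivestep} reduces to the single moment-type inequality
\[
  \int_0^{\pi/(a+1)} g(\theta)\,\lambda_a^\mu F_a(\theta)\,d\theta\ \geq\ 0\,,\qquad
  g(\theta)\ :=\ \frac{3(a{+}1)}{\pi}\ -\ \frac{4(a{+}1)^3\theta^2}{\pi^3}\ -\ C(a)\,.
\]
The function $g$ is a downward-opening parabola in $\theta$ that is positive near $0$ and negative near $\pi/(a{+}1)$, with a single explicit zero $\theta_\ast\in(0,\pi/(a{+}1))$.

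The main obstacle is then to show that the positive contribution of $g$ on $[0,\theta_\ast]$ dominates its negative contribution on $[\theta_\ast,\pi/(a{+}1)]$ once weighted by $\lambda_a^\mu F_a$.  My plan is to exploit the concentration of this weight near $\theta=0$ provided by $\mu\geq 3$: since $\lambda_a$ attains its maximum $a{+}1$ at $0$ and vanishes at $\pi/(a{+}1)$, raising it to the $\mu$th power amplifies the positive region of $g$ relative to the negative region.  Concretely, I would split the integral at $\theta_\ast$, bound $\lambda_a^\mu F_a$ from above on $[\theta_\ast,\pi/(a{+}1)]$ using the maxima of $\lambda_a$ and $F_a$ there, and bound it from below on a small neighborhood of $0$ inside $[0,\theta_\ast]$ using the Taylor expansion $F_a(\theta)=2(a^2-2a-1)\theta^2+O(\theta^4)$ and $\lambda_a(0)=a{+}1$.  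Making these estimates sharp enough to produce exactly $C(a)$ uniformly in $a\geq 3$ and $\mu\geq 3$ is the technical heart of the argument; the Mercer--Caccia identification of $C(a)$ above is strong evidence that the approach lands on the correct constant without further loss.
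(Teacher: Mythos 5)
Your setup takes a genuinely different, and in some ways more transparent, first step: you derive a pointwise parabolic lower bound for $\lambda_a$ directly from the Mercer--Caccia inequality, whereas the paper uses the linear secant bound $\ell_a(\theta) = \frac{(a+1)^2}{\pi}\bigl(\frac{\pi}{a+1}-\theta\bigr) \leq \lambda_a(\theta)$ established separately in Lemma~\ref{L:ell_a} via a Rolle's-theorem argument about $\lambda_a''$. Your identification of $C(a)$ as exactly $1/\bigl(3\frac{x}{\pi} - 4\frac{x^3}{\pi^3}\bigr)$ at $x = \pi/(a+1)$ is a nice conceptual observation, and the reduction to $\int_0^{\pi/(a+1)} g\,\lambda_a^\mu F_a \geq 0$ with $g$ a downward parabola is the right shape of problem.

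However, there is a genuine gap: the central quantitative step is not done. You correctly observe that $g$ is positive near $0$ and negative near $\pi/(a+1)$, and that one must show the $\lambda_a^\mu F_a$-weighted positive contribution dominates the negative one, but you only describe a plan (Taylor-expand $F_a$ near $0$, bound things on the negative region) and then concede this is ``the technical heart of the argument.'' In the paper this is precisely where all the work happens: after replacing $\lambda_a$ by the linear $\ell_a$, the integrand on $[0, \pi/(2(a+1))]$ is bounded below using $\lambda_a \geq 2(a+1)/\pi$ (so that $\int L_a F_a$ can be evaluated in closed form), the integrand on $[b,\pi/(a+1)]$ is bounded above by $4\,|L_a|\,\calL_a^\mu$ for an explicit linear $\calL_a$ (again closed form), and the difference is shown positive by reducing to a polynomial with positive coefficients after a shift. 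Your Taylor-expansion idea near $\theta = 0$ is not obviously adequate: $F_a$ vanishes to second order at $0$, and for $a=3$ the leading coefficient $2(a^2-2a-1)=4$ is small, so the weight is not strongly concentrated near $0$; controlling the $O(\theta^4)$ remainder uniformly in $a$ and producing a bound that beats the negative region for all $\mu\geq 3$ would require essentially the same amount of explicit estimation as the paper carries out. Also, a minor but real point: your pointwise bound $\lambda_a(\theta) \geq \frac{3(a+1)}{\pi} - \frac{4(a+1)^3\theta^2}{\pi^3}$ is derived by dividing one lower bound by an upper bound, which is only valid where the numerator bound is nonnegative (i.e.\ $\theta \leq \frac{\sqrt{3}\pi}{2(a+1)}$); on the remaining part of $[0,\pi/(a+1)]$ you must separately invoke $\lambda_a \geq 0$. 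This is easily patched, but should be stated. As it stands, the proposal identifies the correct structure and constant but does not constitute a proof.
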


\begin{proof}[Induction step of Lemma~$\ref{L:final_estimate}$]

Suppose that we have 
 \begin{equation}\label{eq:positive_integral_difference}
    \int_0^{\frac{\pi}{a+1}} \lambda_a^\mu F_a\  > \ 
      \int_{\frac{\pi}{a+1}}^{\frac{\pi}{2}} \left| \ \lambda_a^\mu F_a \,  \right|\,,
 \end{equation}
for some number $\mu$.
We use the Mercer-Caccia inequality~\eqref{eq:MercerCaccia} at $x=\frac{\pi}{a+1}$ to obtain
\[
   \sin\tfrac{\pi}{a+1}\ \geq\ 3\frac{\frac{\pi}{a+1}}{\pi}\ -\
   4\frac{(\frac{\pi}{a+1})^3}{\pi^3}\ =\  
    \frac{3(a{+}1)^2-4}{(a{+}1)^3}\,.
\]
For $\theta\in[\frac{\pi}{a+1},\frac{\pi}{2}]$, we have
$\sin\theta\geq\sin\frac{\pi}{a+1}$ and $|\sin{(a{+}1)\theta}|\leq 1$, and therefore
 \begin{equation}\label{Eq:Ca}
  |\lambda_a(\theta)|\ =\ \left|\frac{\sin{(a{+}1)\theta}}{\sin{\theta}}\right| \leq 
  \left|\frac{1}{\sin\frac{\pi}{a+1}}\right|\ \leq\ \frac{(a+1)^3}{3(a+1)^2 - 4}\,.
 \end{equation}
This last number is the constant in Lemma~\ref{L:inductive_step}, which we now denote by 
\defcolor{$C_a$}.
By Lemma~\ref{L:inductive_step}, our induction
hypothesis~\eqref{eq:positive_integral_difference}, and~\eqref{Eq:Ca}, we have 
\[
  \int_{0}^{\frac{\pi}{a+1}} \lambda_a^{\mu+1} F_a
  \ \geq\ C_a \int_{0}^{\frac{\pi}{a+1}} \lambda_a^\mu F_a
  \ \geq\ C_a \int_{\frac{\pi}{a+1}}^{\frac{\pi}{2}} \big|\,\lambda_a^\mu F_a \, \big|
  \ \geq\  \int_{\frac{\pi}{a+1}}^{\frac{\pi}{2}} \big|\,\lambda_a^{\mu+1} F_a \, \big|\,,
\]
which completes the induction step of Lemma~\ref{L:final_estimate}.
\end{proof}

Our proof of Lemma~\ref{L:inductive_step} uses some linear bounds for $\lambda_a$.
To gain an idea of the task at hand, in Figure~\ref{F:integral} we show the integrand
$\lambda_a^\mu F_a$ and   
$\lambda_a$ on $[0,\frac{\pi}{a+1}]$, for $a=4$ and $\mu=2$.  
\begin{figure}[htb]
 \begin{picture}(185,239)(-2,-2)
  \put(2,0){\includegraphics[height=236pt]{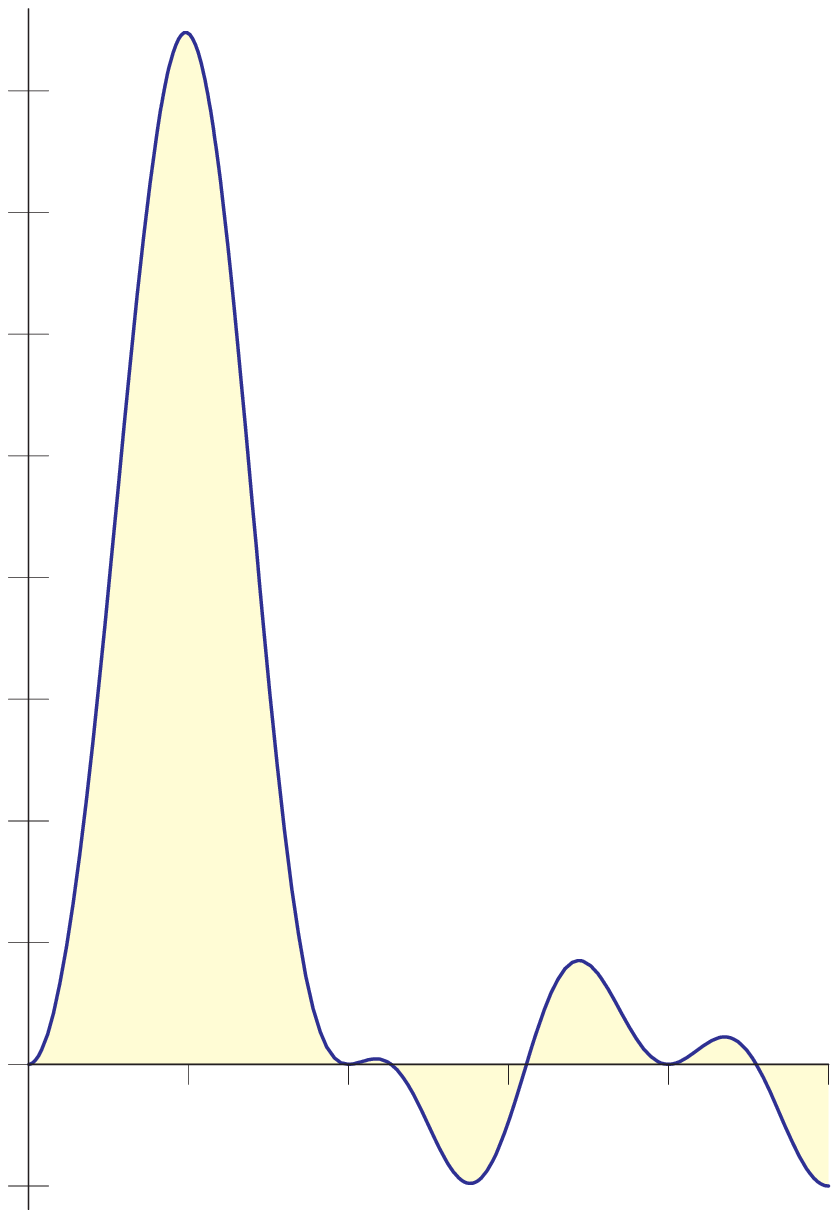}}
  \put(6,217){\small$8$}
  \put(6,169){\small$6$}
  \put(6,121){\small$4$}
  \put(6, 73){\small$2$}
  \put(-2,2){\small $-1$}
  \put(63,130){$\lambda_4^2 F_4$}
  \put( 75,13){\small$\frac{\pi}{5}$}
  \put(137,13){\small$\frac{2\pi}{5}$}
  \put(170,13){\small$\frac{\pi}{2}$}
 \end{picture}
   \qquad
  \begin{picture}(160,239)(-3,-1)
  \put(0,0){\includegraphics[height=236pt]{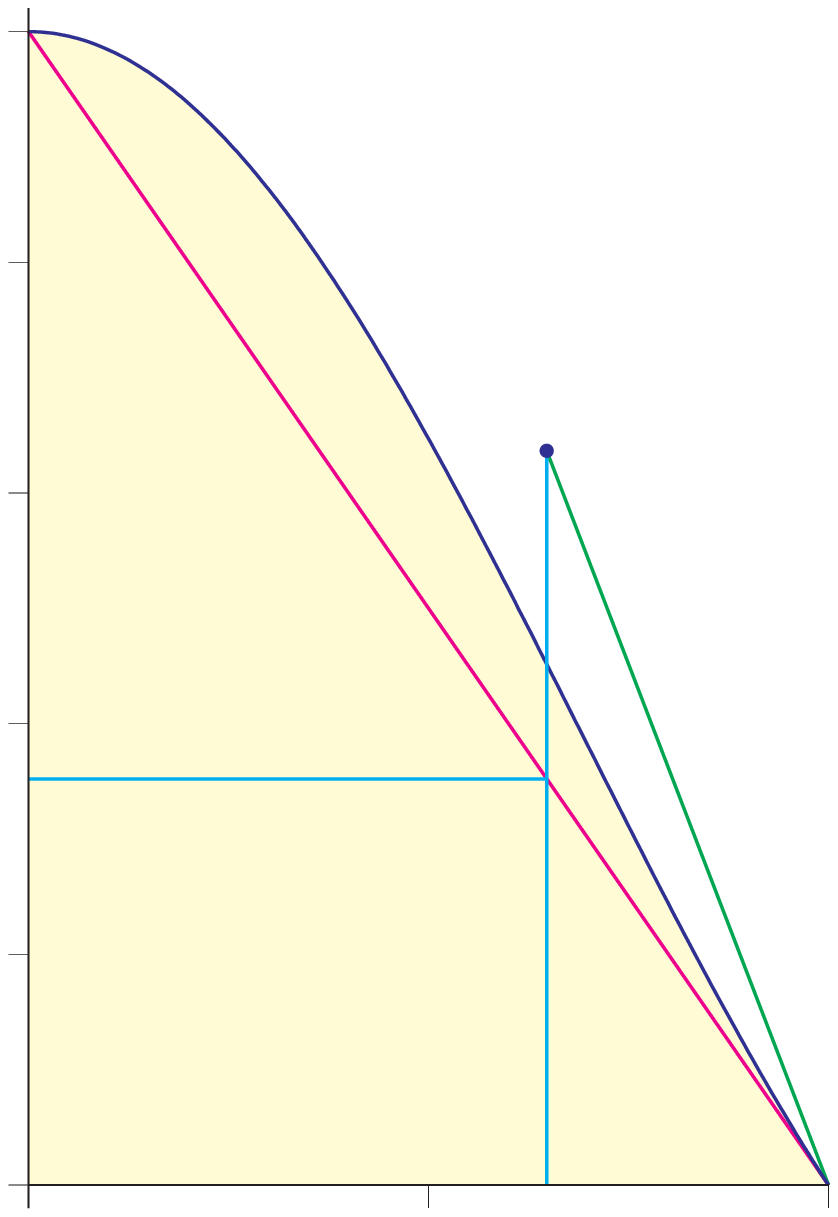}}
  \put(-3,228){\small$5$}
  \put(-3,187){\small$4$}
  \put(-3,145){\small$3$}
  \put(-3,103){\small$2$}
  \put(-3, 61){\small$1$}
  \put(-3, 19){\small$0$}
  \put(57,200){$\lambda_4$}
  \put( 77,8){\small$\frac{\pi}{10}$}
  \put(149,8){\small$\frac{\pi}{5}$}

  \put( 99,10){\small$b$}
  \put(-8,92){$C_a$}
  \put(25,180){$\ell_a$}
  \put(85,163){$(b,\frac{2\pi(a{+}1)}{\pi}$)}
  \put(123,110){$\calL_a$}  
 \end{picture}
  \caption{The integrand $\lambda_4^2 F_4$ and $\lambda_4$.}
  \label{F:integral}
\end{figure} 

We estimate $\lambda_a$.
Define the linear function
\[
   \defcolor{\ell_a(\theta)}\ :=\ \tfrac{(a{+}1)^2}{\pi}(\tfrac{\pi}{a+1} - \theta)\,,
\]
which is the line through the points $(0,a{+}1)$ and $(\frac{\pi}{a+1},0)$ on the graph of
$\lambda_a$.

\begin{lemma}\label{L:ell_a}
 For $\theta$ in the interval $[0,\frac{\pi}{a+1}]$, we have 
 $\ell_a(\theta)\leq \lambda_a(\theta)$.
\end{lemma}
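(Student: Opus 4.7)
The plan is to separately bound the numerator and denominator of $\lambda_a(\theta) = \sin((a+1)\theta)/\sin\theta$ and then divide. At the endpoints $\theta = 0$ and $\theta = \pi/(a+1)$ both sides coincide (taking values $a+1$ and $0$ respectively), so these cases are immediate. For $\theta \in (0, \pi/(a+1))$ one has $\sin\theta > 0$, so the inequality is controlled by a lower bound on $\sin((a+1)\theta)$ and an upper bound on $\sin\theta$. I take $\sin\theta \leq \theta$ from Proposition~\ref{P:sine_ineqs}, and apply the quadratic lower bound
\[
  \sin\phi\ \geq\ \frac{\phi(\pi - \phi)}{\pi} \qquad \text{for } \phi \in [0,\pi],
\]
with $\phi = (a+1)\theta$. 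Dividing the first by the second yields exactly
\[
  \lambda_a(\theta)\ \geq\ \frac{(a+1)\theta\bigl(\pi - (a+1)\theta\bigr)/\pi}{\theta}
    \ =\ (a+1)\ -\ \frac{(a+1)^2\theta}{\pi}\ =\ \ell_a(\theta).
\]

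The main obstacle is establishing the auxiliary inequality $\pi\sin\phi \geq \phi(\pi - \phi)$ on $[0,\pi]$, which is not among those listed in Proposition~\ref{P:sine_ineqs}. I would give a short calculus argument. Set $h(\phi) := \pi\sin\phi - \phi(\pi - \phi)$. Both summands are invariant under $\phi \mapsto \pi - \phi$, so $h$ is symmetric about $\pi/2$, reducing the problem to $[0,\pi/2]$. Direct computation shows $h(0) = h'(0) = h'(\pi/2) = 0$, while $h''(\phi) = 2 - \pi\sin\phi$ changes sign exactly once on $[0,\pi/2]$, namely at $\phi = \arcsin(2/\pi)$, from positive to negative. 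Hence $h'$ rises from $0$ to a positive maximum and then decreases back to $0$, giving $h' \geq 0$ on $[0,\pi/2]$ and therefore $h(\phi) \geq h(0) = 0$.

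A tempting alternative would be to argue that $\lambda_a$ is concave on $[0,\pi/(a+1)]$, in which case the chord $\ell_a$ would automatically lie below the graph; but this fails already at $a = 2$, where $\lambda_2(\theta) = 1 + 2\cos 2\theta$ is convex on $[\pi/4, \pi/3]$. The quadratic sine bound is the correct quantitative input needed to control the non-concave portion near $\theta = \pi/(a+1)$.
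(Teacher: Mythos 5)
Your proof is correct, and it takes a genuinely different route from the paper's. The paper argues intrinsically about $\lambda_a$: it expands $\lambda_a$ as a sum of cosines, observes that $\lambda_a''$ is increasing on $[0,\frac{\pi}{a+1}]$ (and hence has at most one zero there), verifies that $\ell_a$ lies strictly below $\lambda_a$ near both endpoints by comparing the slopes there, and then rules out any interior crossing by a Rolle-type counting argument. You instead bound the numerator and denominator of $\lambda_a = \sin((a{+}1)\theta)/\sin\theta$ separately --- a quadratic lower bound $\sin\phi \geq \phi(\pi-\phi)/\pi$ in the numerator and the elementary $\sin\theta\leq\theta$ in the denominator --- and then divide; the quotient of the two bounds is exactly $\ell_a(\theta)$, which is a pleasingly clean coincidence. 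Your approach is more elementary and more self-contained, in that it replaces the Fourier-series and Rolle argument by a single universal sine inequality. The small cost is that the quadratic \emph{lower} bound you need is not among the inequalities collected in Proposition~\ref{P:sine_ineqs} (which only records a quadratic \emph{upper} bound); you noticed this and supplied a short calculus proof of it ($h(\phi)=\pi\sin\phi - \phi(\pi-\phi)$ is symmetric about $\pi/2$, has $h(0)=h'(0)=h'(\pi/2)=0$ and $h''$ changing sign once on $[0,\pi/2]$), which is correct. Your closing remark that a pure concavity argument would fail is also sound and explains why some quantitative input like the quadratic bound, or the paper's second-derivative analysis, is genuinely needed.
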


\begin{proof}
 We need some information about the derivatives of $\lambda_a(\theta)$.
 First observe that
 \begin{eqnarray*}
  \lambda_a(\theta)&=&\frac{\sin(a{+}1)\theta}{\sin\theta}\ =\ 
   \frac{e^{i(a+1)\theta}-e^{-i(a+1)\theta}}{e^{i\theta}-e^{-i\theta}}\ =\ 
   \sum_{j=0}^a e^{i(a-2j)\theta}\\
  &=& 2\cos a\theta\ +\ 2\cos(a{-}2)\theta \ +\ \dotsb\ +\ 
    \left\{ \begin{array}{ccl} 2\cos\theta&&\mbox{if $a$ is odd}\\ 
                               1&&\mbox{if $a$ is even}\end{array}\right.
 \end{eqnarray*}
 From this, we see that $\lambda_a'(0)=0$ and 
 $\lambda_a'$ is negative on $(0,\frac{\pi}{a+1})$.
 Moreover, $\lambda_a''$ is a sum of terms of the form
 $-2(a{-}2j)^2\cos(a{-}2j)\theta$, for $0\leq j<\frac{a}{2}$.
 Thus $\lambda_a''$  is increasing on $[0,\frac{\pi}{a+1}]$, as each term
 is increasing on that interval.

 Since $\ell_a$ has negative slope and $\lambda_a'(0)=0$, we have 
 $\ell_a(\theta)< \lambda_a(\theta)$ for $\theta\in [0,\frac{\pi}{a+1}]$ near $0$.
  We compute $\lambda_a'(\frac{\pi}{a+1})$.
  Since
\[
   \lambda_a'(\theta)\ =\ \frac{(a{+}1)\cos(a{+}1)\theta}{\sin\theta}\ -\ 
   \frac{\cos\theta\sin(a{+}1)\theta}{\sin^2\theta}\,,
\]
 we have 
\[
   \lambda_a'(\tfrac{\pi}{a+1})\ =\ -\frac{a{+}1}{\sin\frac{\pi}{a+1}}\ <\ -\frac{(a{+}1)^2}{\pi}\,,
\]
 as $0<\sin\frac{\pi}{a+1}<\frac{\pi}{a+1}$.
 Thus at $\theta=\frac{\pi}{a+1}$, we have $\lambda_a(\theta)=\ell_a(\theta)=0$ and
 $\lambda_a'(\theta) < \ell'_a(\theta)$ and so $\ell_a(\theta)< \lambda_a(\theta)$  for
 $\theta\in [0,\frac{\pi}{a+1}]$ near $\frac{\pi}{a+1}$.

 If $\ell_a(\theta)> \lambda_a(\theta)$ at some point $\theta\in(0,\frac{\pi}{a+1})$, 
 then we would have $\ell_a(\theta)= \lambda_a(\theta)$ for at least two points 
 $\theta$ in $(0,\frac{\pi}{a+1})$.
 Since $\ell_a(\theta)= \lambda_a(\theta)$ at the endpoints, Rolle's Theorem would imply that
 $\lambda_a''$ has at least two zeroes in $(0,\frac{\pi}{a+1})$, which is impossible as
 $\lambda_a''$ is increasing. 
\end{proof}

\begin{proof}[Proof of Lemma~$\ref{L:inductive_step}$]
 By Lemma~\ref{L:ell_a}, we have 
 \[
   \int_{0}^{\frac{\pi}{a+1}} \lambda_a^{\mu+1} F_a
   \ \geq \ \int_{0}^{\frac{\pi}{a+1}} \ell_a \lambda_a^{\mu} F_a\,,
 \]
 and so it suffices to prove
 \[
   \int_{0}^{\frac{\pi}{a+1}} \ell_a\lambda_a^{\mu} F_a
   \ \geq \ C_a\,\int_{0}^{\frac{\pi}{a+1}} \lambda_a^{\mu}F_a\,.
 \]
This is equivalent to showing that
 \begin{equation}\label{Eq:ineq_line}
   \int_{0}^{\frac{\pi}{a+1}} (\ell_a-C_a)\lambda_a^{\mu} F_a
  \ \geq \ 0.
 \end{equation}
As $\defcolor{L_a}:=\ell_a - C_a$ is linear, this is the difference of two integrals of positive
functions.
We establish the inequality~\eqref{Eq:ineq_line} by estimating each of those integrals.

The function $L_a$ is a line with slope $-\frac{(a+1)^2}{\pi}$ and zero at
 \[
   \defcolor{b}\ :=\  \frac{2(a^2+2a-1)\pi}{(a+1)(3a^2+6a-1)}\ \in\ 
    \left[\frac{\pi}{2(a{+}1)}\,,\,\frac{\pi}{a{+}1}\right]\,.  
 \]
The inequality \eqref{Eq:ineq_line} is equivalent to 
 \begin{equation}\label{Eq:another_one}
  \int_{0}^{b} L_a\, \lambda_a^\mu F_a\ \geq\ 
  \int_{b}^{\frac{\pi}{a+1}} | L_a | \, \lambda_a^\mu F_a\,.
 \end{equation}

For $\theta\in[0,\frac{\pi}{2(a{+}1)}]$, the linear inequalities of Proposition~\ref{P:sine_ineqs}
give 
 \[
  \sin{(a{+}1)\theta}\ \geq\ \frac{2}{\pi}(a{+}1)\theta
    \qquad\mbox{and}\qquad
    \sin \theta\ \leq\ \theta\,,
 \]
 and thus 
\[
    \lambda_a(\theta)\ =\ \frac{\sin{(a{+}1)\theta}}{\sin\theta}\ \geq\
     \frac{2(a{+}1)}{\pi}\,.
\]
 Since $L_a\lambda_a^\mu F_a$ is nonnegative on $[0,b]$ and $\frac{\pi}{2(a{+}1)}<b$,
 we have 
\[
   \int_{0}^{b} L_a\, \lambda_a^\mu F_a  \ \geq\ 
   \int_{0}^{\frac{\pi}{2(a+1)}} L_a\, \lambda_a^\mu F_a\ \geq\ 
   \frac{2^\mu (a{+}1)^\mu}{\pi^\mu} \int_{0}^{\frac{\pi}{2(a+1)}} L_a\, F_a\,.
\]
 We may exactly compute this last integral to obtain
%
%
%
%
%
%
 \begin{eqnarray*}
   \int_{0}^{\frac{\pi}{2(a+1)}} L_a\, F_a &=& \frac{1}{8\pi a^2 (3a^2+6a -1)}\cdot [
   (5\pi^2a^4 + (10\pi^2{-}24)a^3 - (7\pi^2{+}60)a^2 - 16a +4) \\
   &&+\cos{\frac{a\pi}{a+1}} \cdot (12a^4 + 48a^3 + 56a^2 +16a - 4)\\
   &&+\sin{\frac{a\pi}{a+1}} \cdot (-4\pi a^4 - 12\pi a^3 + 4\pi a^2 +12\pi a)]\,.
 \end{eqnarray*}
As $a>1$, we have $\cos{\frac{a\pi}{a+1}} > -1$ and $\sin{\frac{a\pi}{a+1}}> 0$.
Substituting these values into this last formula and multiplying by $(2(a{+}1)/\pi)^\mu$
gives a lower bound for the integral on the left of~\eqref{Eq:another_one},
 \begin{equation}
  \defcolor{A}\ :=\ 
   \frac{2^\mu (a+1)^\mu ((5\pi^2{-}12)a^4 + (10\pi^2{-}72)a^3 - (7\pi^2{+}116)a^2 - 32a +8)}%
    { 8 \pi^{\mu+1} a^2 (3a^2+6a -1)}\,.
 \end{equation}
%
%

For the integral on the right of~\eqref{Eq:another_one}, consider the line through the
points $(\frac{\pi}{a+1},0)$ and $(b,\frac{2(a{+}1)}{\pi})$,  
%
%
 \[
   \defcolor{\calL_a}\ :=\ \frac{2(3a^2+6a-1)}{\pi^2}\left( \frac{\pi}{a{+}1}-\theta \right)\,.
 \]
We claim that $\lambda_a < \calL_a$ in the interval $[b,\frac{\pi}{a+1}]$.
To see this, first note that the slope of a secant line through $(\frac{\pi}{a+1},0)$ and a
point $(\theta,\lambda_a(\theta))$ on the graph of $\lambda_a$ is 
 \begin{equation}\label{Eq:slope}
   \frac{\sin{(a{+}1)\theta}}{(\theta-\frac{\pi}{a+1})\sin{\theta}}\,.
 \end{equation}
As observed in Proposition~\ref{P:sine_ineqs}, $\sin{(a{+}1)\theta}$ is bounded above by the parabola,
 \[
   \sin{(a{+}1)\theta}\ \leq\
     \frac{4(a{+}1)^2}{\pi^2} \theta\, \left(\frac{\pi}{a{+}1} - \theta\right)\,.
 \]
We use this and the Mercer--Caccia inequality~\eqref{eq:MercerCaccia} for $\sin{\theta}$ to
bound the slope~\eqref{Eq:slope},
 \[ 
   \frac{\sin{(a{+}1)\theta}}{(\theta-\frac{\pi}{a{+}1})\sin{\theta}}
   \  \leq\ 
    \frac{4 \pi (a{+}1)^2}{(3\pi^2-4\theta^2)}
   \ \leq\ \frac{4(a{+}1)^4}{\pi(3a^2+6a-1)},
 \]
with the second equality holding as the minimum of the denominator $(3\pi^2-4\theta^2)$ on the
interval $[b,\frac{\pi}{a+1}]$ occurs at $\theta=\frac{\pi}{a+1}$. 
When $a\geq 3$ we have,
 \[
   \frac{4(a+1)^4}{\pi(3a^2+6a-1)}\ <\ \frac{2(3a^2+6a-1)}{\pi^2},
 \]
which so it follows that $\lambda_a< \calL_a$ on $[b,\frac{\pi}{a+1}]$. 

Using this and the easy inequality $F_a< 4$, we bound the integral on the right of~\eqref{Eq:another_one},
 \[ 
   \int_{b}^{\frac{\pi}{a+1}} |L_a| \, \lambda_a^\mu F_a
    \ <\
   \int_{b}^{\frac{\pi}{a+1}} |L_a|\, \calL^\mu F_a
    \ <\ 
    \int_{b}^{\frac{\pi}{a+1}} 4 |L_a|\, \calL^\mu\,.
 \]
  The last integral is not hard to compute,
\[ 
    \defcolor{B}\ :=\ \int_{b}^{\frac{\pi}{a+1}} 4 |L_a|\, \calL_a^\mu
    \ =\ \frac{2^{\mu+2}(a+1)^{\mu+3}[\mu+1-(a+1)(\mu+2)]}{\pi^{\mu-1}(\mu+1)(\mu+2) (3a^2+6a-1)^2}\;.
\]
 We claim that $A-B > 0$, which will complete the proof of Lemma~\ref{L:inductive_step} and
 therefore the induction step for Lemma~\ref{L:final_estimate}.
 For this, we observe that if multiply $A-B$ by their common (positive) denominator,
 we obtain an expression of the form $2^\mu(a+1)^\mu P(a,\mu)$, where $P$ is a polynomial of
 degree six in $a$ and two in $\mu$.
 After making the substitution $P(3+x,3+y)$, we obtain a polynomial in $x$ and $y$ in which
 every coefficient in positive, which implies that $A-B>0$ when $a,m\geq 3$, and completes the
 proof. 
\end{proof}

%
\subsubsection{Base of the induction for Lemma~$\ref{L:final_estimate}$}\label{SS:base}

We establish the inequality~\eqref{eq:integral_positive} of Lemma~\ref{L:final_estimate} when
$\mu=4$, which is the base case of our inductive proof.
This inequality is
 \begin{equation}\label{Eq:mu=4}
    \int_0^{\frac{\pi}{a+1}} \lambda_a^4\, F_a\ >\ 
    \int_{\frac{\pi}{a+1}}^{\frac{\pi}{2}} |\lambda_a^4\, F_a|
   \qquad\mbox{for every }a\geq 3\,.
 \end{equation}
We establish this inequality by replacing each integral by one which we
may evaluate in elementary terms, and then compare the values.

We first find an upper bound for the integral on the right.
Recall that
\[
   \lambda_a(\theta)\ =\ \frac{\sin(a{+}1)\theta}{\sin \theta}
    \qquad\mbox{and}\qquad
   F_a(\theta)\ =\ 1-2\cos 2a\theta + \cos 2(a{+}1)\theta\,.
\]
Since $|\lambda_a(\theta)|\leq\frac{1}{\sin\theta}$ and $|F_a(\theta)|\leq 4$ for 
$\theta\in[\frac{\pi}{a+1},\frac{\pi}{2}]$, we have
\[
   \int_{\frac{\pi}{a+1}}^{\frac{\pi}{2}} |\lambda_a^4\, F_a|
     \ \leq\ 
   4\int_{\frac{\pi}{a+1}}^{\frac{\pi}{2}} \frac{1}{\sin^4\theta}
    \ =\ 
   \frac{4}{3}\cot \tfrac{\pi}{a{+}1}\bigl( 2 + \csc^2\tfrac{\pi}{a{+}1})\,.
\]
For $a\geq 3$, we have $0<\frac{\pi}{a+1}\leq\frac{\pi}{4}$.
As we observed in Proposition~\ref{P:sine_ineqs}, this implies that 
$\sin\frac{\pi}{a+1}\geq\frac{\pi}{a+1}\frac{2\sqrt{2}}{\pi}=\frac{2\sqrt{2}}{a+1}$,
and so $\frac{1}{\sin\frac{\pi}{a+1}}\geq \frac{a+1}{2\sqrt{2}}$.
Since $0\leq\cos\frac{\pi}{a+1}\leq 1$, we have
 \begin{equation}\label{eq:rhs_estimate}
   \frac{4}{3}\cot \tfrac{\pi}{a{+}1}\bigl( 2 + \csc^2\tfrac{\pi}{a{+}1})
    \ \leq\  \frac{4(a{+}1)}{3\sqrt{2}} + \frac{(a{+}1)^3}{12\sqrt{2}}
   \ =:\ \defcolor{B}\,.
 \end{equation}

We now find a lower bound for the integral on the left of~\eqref{Eq:mu=4}.
We use the estimate from Lemma~\ref{L:ell_a}, that for $\theta\in[0,\frac{\pi}{a+1}]$, we have
\[
   \lambda_a(\theta)\ \geq\ \ell_a(\theta)\ =\ 
    \frac{(a+1)^2}{\pi}\left( \frac{\pi}{a+1}-\theta\right)\,.
\]
Using this gives the lower bound,
\[
  \int_0^{\frac{\pi}{a+1}} \lambda_a^4 F_a\ >\ 
  \frac{(a+1)^8}{\pi^4} \int_0^{\frac{\pi}{a+1}}\left( \tfrac{\pi}{a+1}-\theta\right)^4 
     \bigl(1-2\cos 2a\theta+\cos 2(a{+}1)\theta\bigr)\,.
\]
This may be evaluated in elementary terms to obtain
 \begin{equation}\label{Eq:Exact_estimate}
  \frac{3(a{+}1)^8}{2a^5\pi^4}\sin\tfrac{2\pi}{a+1}
   +\frac{\pi(a+1)^3}{5}-\frac{2(a+1)^5}{\pi a^2}
   +\frac{3(a+1)^7}{\pi^3 a^4} + \frac{(a+1)^3}{\pi}
   -\frac{3(a+1)^3}{2\pi^3}\ .
 \end{equation}
For $a\geq 3$, $0\leq \frac{2\pi}{a+1}\leq\frac{\pi}{2}$, we have the bound from
Proposition~\ref{P:sine_ineqs} of $\sin\frac{2\pi}{a+1}\geq\frac{4}{a+1}$.
Thus the expression~\eqref{Eq:Exact_estimate} is bounded below by
 \begin{equation}\label{Eq:rational_estimate}
   \defcolor{A}\ :=\ \frac{6(a{+}1)^7}{\pi a^5}   +\frac{\pi(a+1)^3}{5}-\frac{2(a+1)^5}{\pi a^2}
   +\frac{3(a+1)^7}{\pi^3 a^4} + \frac{(a+1)^3}{\pi}
   -\frac{3(a+1)^3}{2\pi^3}\ .
 \end{equation}
Then the difference $A-B$ of the expressions from~\eqref{Eq:rational_estimate}
and~\eqref{eq:rhs_estimate} is a rational function of the form
\[
    \frac{(a+1)\cdot P(a)}{120 \pi^4 a^5}\ ,
\]
where $P(a)$ is a polynomial of degree seven.
If we expand $P(3+x)$ in powers of $x$, then we obtain a polynomial of degree seven in $x$
with positive coefficients.
This establishes the inequality~\eqref{Eq:mu=4} for all $a\geq 3$, which is the base case
of the induction proving Lemma~\ref{L:final_estimate}.
This completes the proofs of Lemma~\ref{L:final_estimate},
Lemma~\ref{L:equal_inequality}, and ultimately of Theorem~\ref{Th:one}. \qed

\providecommand{\bysame}{\leavevmode\hbox to3em{\hrulefill}\thinspace}
\providecommand{\MR}{\relax\ifhmode\unskip\space\fi MR }
\providecommand{\MRhref}[2]{%
  \href{http://www.ams.org/mathscinet-getitem?mr=#1}{#2}
}
\providecommand{\href}[2]{#2}


\end{document}